\newtheorem{Assumption}{Assumption}
\newtheorem{Proposition}{Proposition}
\newtheorem{Theorem}{Theorem}
\newtheorem{Corollary}{Corollary}
\newtheorem{example}{Example}
\newcommand{\bbN}{\mathbb{N}}
\newcommand{\bbP}{\mathbb{P}}
\newcommand{\bbR}{\mathbb{R}}
\newcommand{\cA}{\mathcal{A}}
\newcommand{\cB}{\mathcal{B}}
\newcommand{\cC}{\mathcal{C}}
\newcommand{\cD}{\mathcal{D}}
\newcommand{\cS}{\mathcal{S}}
\newcommand{\cW}{\mathcal{W}}
\newcommand{\cX}{\mathcal{X}}
\newcommand{\from}{\colon}
\newcommand{\rd}{\mathrm{d}}
\DeclareMathOperator{\Cov}{Cov}
\DeclareMathOperator*{\argmin}{arg\,min}
\newcommand{\proper}{\mathsf}
\newcommand{\pE}{\proper{E}}
\newcommand{\pN}{\proper{N}}
\newcommand{\mv}[1]{{\boldsymbol{\mathrm{#1}}}}
\newcommand{\md}{\ensuremath{\,\mathrm{d}}}
\begin{document}

\def\spacingset#1{\renewcommand{\baselinestretch}%
{#1}\small\normalsize} \spacingset{1}
\date{}
\renewcommand\Affilfont{\small}

  \title{\bf Spatial confounding under infill asymptotics}
  \author[1]{David Bolin}
  \affil[1]{Statistics Program, Computer, Electrical and Mathematical Sciences and Engineering Division, King Abdullah University of Science and Technology (KAUST), Thuwal, Saudi Arabia}
    \author[2]{Jonas Wallin}
    \affil[2]{Department of Statistics, Lund University, Lund, Sweden}
  \maketitle

\begin{abstract}
The estimation of regression parameters in spatially referenced data plays a crucial role across various scientific domains. A common approach involves employing an additive regression model to capture the relationship between observations and covariates, accounting for spatial variability not explained by the covariates through a Gaussian random field.  While theoretical analyses of such models have predominantly focused on prediction and covariance parameter inference, recent attention has shifted towards understanding the theoretical properties of regression coefficient estimates, particularly in the context of spatial confounding. 
This article studies the effect of misspecified covariates, in particular when the misspecification changes the smoothness. We analyze the theoretical properties of the generalize least-square estimator under infill asymptotics, and show that the estimator can have counter-intuitive properties. 
In particular, the estimated regression coefficients can converge to zero as the number of observations increases, despite high correlations between observations and covariates. Perhaps even more surprising, the estimates can diverge to infinity under certain conditions.  Through an application to temperature and precipitation data, we show that both behaviors can be observed for real data. Finally, we propose a simple fix to the problem by adding a smoothing step in the regression.
\end{abstract}

\noindent%
{\it Keywords:}  spatial regression, generalized least-squares estimator, maximum likelihood, misspecification,  estimation

\section{Introduction}
The estimation of regression parameters, sometimes also referred to as fixed effects, for spatially referenced data is of fundamental importance for practical applications in geostatistics, climate, disease mapping, brain imaging, and many other fields. A simple and commonly used model in these areas is the following additive regression model for observations $Y_1,\ldots, Y_n$ at spatial locations $s_1, \ldots, s_n$ in some domain $D$,
\begin{equation}\label{eq:main_model}
	Y_i = \sum_{k=1}^K X_k(s_i) \beta_k + \epsilon(s_i) = \mv{X}(s_i)\mv{\beta} + \epsilon(s_i),
\end{equation}
where $X_1,\ldots,X_K$ are covariates, $\beta_1, \ldots, \beta_K$ regression coefficients, and $\epsilon$ is a centered Gaussian random field that is supposed to capture the spatial variation in the data which is not captured by the covariates. 

The maximum likelihood estimate of the regression coefficients (assuming that the covariance function of $\epsilon$ is known) is given by the generalized least-squares estimator 
\begin{equation}\label{eq:gls}
\hat{\beta}_{GLS} = (\mv{X}^{\top}\mv{\Sigma}^{-1}\mv{X})^{-1}\mv{X}^{\top} \mv{\Sigma}^{-1}\mv{Y},
\end{equation}
where $\mv{Y} = (Y_i,\ldots, Y_n)^\top$, $\mv{X}$ is an $n \times K$ matrix with entries $X_{ik} = X_k(s_i)$ and $\mv{\Sigma}$ is an $n \times n$ matrix with elements $\mv{\Sigma}_{ij} = \Cov(\epsilon(s_i),\epsilon(s_j))$. This estimator is typically preferred over the ordinary least squares estimator for $\mv{\beta}$ (obtained by replacing $\mv{\Sigma} $ with the identity matrix). 
Models of this type have mostly been analyzed theoretically with a focus on prediction  or inference of covariance parameters \citep[e.g.,][]{stein99,Zhang2004,ibragimov2012gaussian,kirchner2022necessary}.
As noted by \cite{khan2022restricted}, there has until recently been few attempts to understand the theoretical properties of the regression coefficient estimates in spatial models like this. However, in the last few years several articles have investigated so-called spatial confounding problems  \cite[see][for an overview of the literature]{khan2023re}. \cite{khan2023re} note that there is no single definition of spatial confounding, but that one can classify spatial confounding in two broad categories of  model analysis confounding and data generation confounding depending on the assumptions that are made. In short,  data generation confounding assumes that the model \eqref{eq:main_model} is misspecified and what impact this has on \eqref{eq:gls}. Pioneering work in this area is \cite{paciorek2010importance} which studied the case when $\epsilon$ and $X$ are generated by a bivariate Gaussian random field.  On the other hand, model analysis confounding does not assume a specific misspecification but rather studies the relation between $\mv{\Sigma}$ and $\mv{X}$ and how this effects \eqref{eq:gls} and how its expected value deviates from the expected value of the OLS estimator. Early work in this area is \cite{reich2006effects} which studied the case when $\mv{\Sigma}$ is generated by a conditionally autoregressive model for disease-mapping models.

Despite the recent interest in spatial confounding, there are some crucial problems with the spatial regression model which seem to have been widely overlooked in the literature, and this is the main focus of this work.
Specifically, we are interested in the scenario when the smoothness of the observed process $Y$ is different from an influential covariate $X$. To model this, we suppose that the true data generating process is 
$$
Y(s) = S X(s) + \epsilon(s), 
$$
where $\epsilon$ is a Gaussian process, $X(s)$ is some rough covariate, and $S$ is some smoothing operator.  Examples of situations like this, where the observed data are smoother than the covariate are not uncommon. One example, which we will study in Section~\ref{sec:app} is standardized temperature and precipitation residuals over the midwest region of the United States (Figure~\ref{fig:data}). If we model the temperature residuals using the precipitation residuals as a covariate, we visually see that the observations are smoother than the covariate, and that there is a clear negative correlation between the observations and the covariate. 

\begin{figure}[t]
	\centering
	\includegraphics[width=0.8\textwidth]{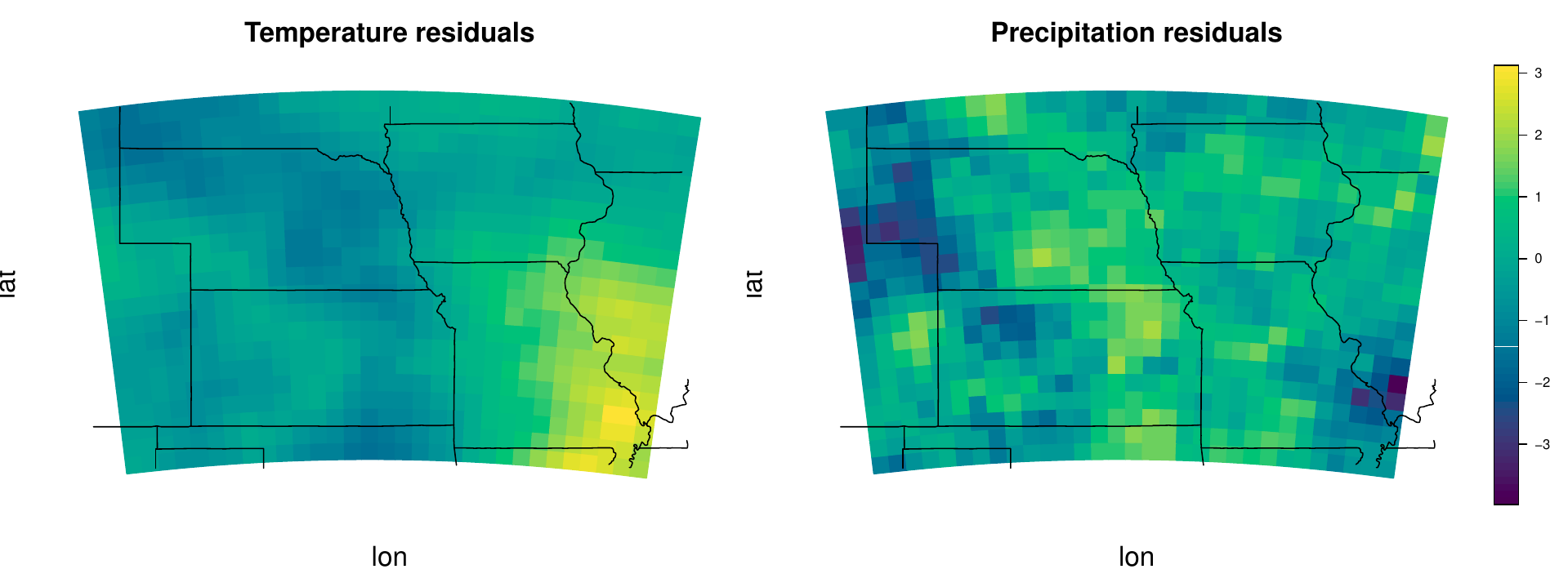}
	\caption{Standardized precipitation and temperature residuals.}
	\label{fig:data}
\end{figure}

Because only $Y(s)$ and $X(s)$ are observed, suppose that we use $X(s)$ as a covariate for $Y(s)$ in a Gaussian process regression
$$
Y(s) = X(s)\beta + \epsilon(s).
$$
In this case, we will show the somewhat counter intuitive result that $\beta\rightarrow 0$ as we get more data, no matter how high the correlation between $Y$ and $X$ is. Thus, using this model will severely underestimate the effect that $X(s)$ has on $Y(s)$.  Not surprisingly, a simple solution to this issue is to perform smoothing of the covariate before using it in the spatial regression.  However, we will show that one needs to do this with some care since, surprisingly, one can obtain that $|\beta|\rightarrow \infty$ as more data is obtained unless the covariate is smoothed enough.

In the next section, we formalize these problems and provide the main theoretical results, which hold for Gaussian process regression on general compact metric spaces. Thus, our theoretical results are much more general than the common setting of having observations in a bounded domain in $\mathbb{R}^d$, and are for example also applicable to data on the sphere or other manifolds. We also provide more detailed results for Gaussian random fields with fractional-order covariance operators, where Gaussian Whittle--Mat\'ern fields \citep{lindgren11, lindgren2022spde} are important special cases, and extend some of the results to observations with measurement noise. 
Two simulation studies are performed in Section~\ref{sec:sim} to illustrate the results, and Section~\ref{sec:app} contains the application to the temperature and precipitation residuals. We end the article with a discussion in Section~\ref{sec:discussion}, where we  relate these results to other recent results for spatial confounding. All proofs are given in the appendix of the manuscript.

\section{Theoretical setup and main results}
In this section, we provide the theoretical setup and the main results. We begin with the most general results and  
then focus on Gaussian fields with fractional-order covariance operators as a special case, which contains several important examples for which we can obtain refined results. 

\subsection{General results}
We let $(\cX,d_{\cX})$  denote a connected, compact metric space 
of infinite cardinality and let $\nu_\cX$ be a strictly positive and finite 
Borel measure on $(\cX,\cB(\cX))$, where $\cB(\cX)$ denotes the Borel $\sigma$-algebra on $\cX$.  We further assume that  $\epsilon$ is a 
square-integrable and centered Gaussian process 
$\epsilon \from\cX\times\Omega\to\bbR$ defined on a complete probability space $(\Omega,\cA,\bbP)$.
The covariance function of $\epsilon$,
$\varrho \from\cX\times\cX \to \bbR$, is assumed to be (strictly) positive definite and we denote the corresponding covariance operator by $\cC$:
\begin{equation}\label{eq:kernel-C}
	\cC\from L_2(\cX,\nu_\cX) \to L_2(\cX,\nu_\cX), 
	\quad 
	(\cC w)(s) := \int_{\cX} \varrho(s, s') w(s') \,\rd\nu_\cX(s'), 
\end{equation}
Here, $L_2(\cX,\nu_\cX)$ is the Hilbert space of real-valued, Borel measurable, square-integrable functions on $\cX$, with inner product $(f, g)_{L_2(\cX,\nu_\cX)} = \int_{\cX} f(s) g(s) d\nu_{\cX}(s)$ and induced norm $\|f\|_{L_2(\cX,\nu_\cX)}^2 = (f, f)_{L_2(\cX,\nu_\cX)} = \int_{\cX}|f(s)|^2d\nu_{\cX}(s)$.

The Gaussian process induces a Gaussian measure $\mu = \pN(0,\cC)$
on the Hilbert space $L_2(\cX,\nu_\cX)$, i.e., 
for every Borel set $A$ we have 
$
\mu(A)
= 
\bbP(\{\omega\in\Omega : Y(\,\cdot\,, \omega)\in A\})$.
The Cameron--Martin space $\cC^{1/2}(L_2(\cX,\nu_\cX))$ associated with 
$\mu$ on $L_2(\cX,\nu_\cX)$, also known as the reproducing kernel Hilbert space of $\mu$,
is a Hilbert space which is defined as 
the range 
of~$\mathcal{C}^{1/2}$ in~$L_2(\cX,\nu_\cX)$ 
equipped with the inner product
$(\,\cdot\,, \,\cdot\,)_{\cC} := (\cC^{-1/2}\,\cdot\,, \cC^{-1/2}\,\cdot\,)_{L_2(\cX,\nu_\cX)}$ and induced norm $\|\cdot\|_\cC = \|\cC^{-1} \cdot\|_{L_2(\cX,\nu_\cX)}$. We similarly define $\cC^{1}(L_2(\cX,\nu_\cX))$ as the 
range 
of~$\mathcal{C}$ in~$L_2(\cX,\nu_\cX)$ 
equipped with the inner product
$(\,\cdot\,, \,\cdot\,)_{\cC^2} := (\cC^{-1}\,\cdot\,, \cC^{-1}\,\cdot\,)_{L_2(\cX,\nu_\cX)}$ and corresponding induced norm.

We are now interested in the maximum-likelihood estimation  of a model with a misspecified mean value. To make the results more generally applicable than to the setup mentioned in the introduction, we make the following assumption.

\begin{Assumption}\label{ass:1}
Suppose that $\{s_i\}_{i\in\bbN}$ is a sequence of locations in $\cX$ which is dense, i.e., each $s\in\cX$ is an accumulation point of the sequence, and let $\{Y_i\}_{i\in\bbN}$ be a corresponding sequence of observations, $Y_i = Y(s_i)$, of the process $Y\sim \mu_{\beta}$, where  
$
\mu_{\beta}=\pN(m+\beta \cS X,\cC).
$
Here $\cS : L_2(\cX,\nu_\cX) \rightarrow L_2(\cX,\nu_\cX)$ is some operator, $m\in \cC^{1/2}(L_2(\cX,\nu_\cX))$, and the covariate $X\neq 0$ satisfies $X \in L_2(\cX,\nu_\cX)$, $\|X\|_{\infty} < \infty$ and $\|\cS X\|_{\infty} < \infty$. 
\end{Assumption}

Our main theorem is the following. 

\begin{Theorem}\label{thm1}
	Suppose that the data is generated according to Assumption~\ref{ass:1} while our candidate model is that $Y\sim 	\hat{\mu}_{\beta}$, where $\hat{\mu}_{\beta}=\pN(\beta  X,\cC)$. Let $\hat{\beta}_n$ denote the maximum likelihood estimator (under the candidate model) based on the $n$ first observations, then:
	\begin{enumerate}[(i)]
		\item if $\cS X\notin \cC^{1/2}(L_2(\cX,\nu_\cX))$ and $(\cS X- X) \in \cC^{1/2}(L_2(\cX,\nu_\cX))$, then $\hat{\beta}_n \rightarrow \beta$ $\bbP$-a.s.;
		\item if $\cS X\in \cC^{1/2}(L_2(\cX,\nu_\cX))$ and $(\cS X- X) \notin \cC^{1/2}(L_2(\cX,\nu_\cX))$ then $\hat{\beta}_n \rightarrow 0$ $\bbP$-a.s.;
		\item if $ X\in \cC^{1}(L_2(\cX,\nu_\cX))$, then $\hat{\beta}_n \rightarrow  \beta_{\infty} = (Y,  X)_{\cC}/\| X\|_{\cC}^2$ $\bbP$-a.s., where $\beta_{\infty}$ is a random variable with $\pE(\beta_\infty) =  (m + \beta \cS X,  X)_{\cC}/\|  X\|_{\cC}^2<\infty$.
	\end{enumerate}
\end{Theorem}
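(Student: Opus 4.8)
\noindent\textbf{Proof strategy for Theorem~\ref{thm1}(iii).}
The plan is to first reduce the claim to the limiting behaviour of two quadratic forms, and then to treat the three components of $Y$ separately. The maximum-likelihood estimator under the candidate model $\hat\mu_\beta=\pN(\beta X,\cC)$, based on the first $n$ observations, is the generalized least-squares estimator $\hat\beta_n=(\mv X_n^\top\mv\Sigma_n^{-1}\mv Y_n)/(\mv X_n^\top\mv\Sigma_n^{-1}\mv X_n)$, where $\mv X_n=(X(s_1),\dots,X(s_n))^\top$, $\mv Y_n$ is the corresponding vector of observations, and $(\mv\Sigma_n)_{ij}=\varrho(s_i,s_j)$. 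The central observation is that $\mv\Sigma_n$ is exactly the Gram matrix of the kernel sections $\varrho(\,\cdot\,,s_i)$ in the Cameron--Martin space $\cC^{1/2}(L_2(\cX,\nu_\cX))$ (by the reproducing property $(\varrho(\,\cdot\,,s_i),\varrho(\,\cdot\,,s_j))_\cC=\varrho(s_i,s_j)$). Writing $P_n$ for the $(\,\cdot\,,\,\cdot\,)_\cC$-orthogonal projection onto $V_n=\spn\{\varrho(\,\cdot\,,s_i):i\le n\}$, one gets the identity $\mv f_n^\top\mv\Sigma_n^{-1}\mv h_n=(P_n f,h)_\cC$ for any $f,h\in\cC^{1/2}(L_2(\cX,\nu_\cX))$, and density of $\{s_i\}$ together with strict positive-definiteness of $\varrho$ gives $P_n\to I$ strongly on $\cC^{1/2}(L_2(\cX,\nu_\cX))$. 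Since $X\in\cC^1(L_2(\cX,\nu_\cX))$, I set $g:=\cC^{-1}X\in L_2(\cX,\nu_\cX)$, so that $\|X\|_\cC^2=(X,\cC^{-1}X)_{L_2}=(X,g)_{L_2}$ and, interpreting $(\,\cdot\,,X)_\cC=(\,\cdot\,,\cC^{-1}X)_{L_2}=(\,\cdot\,,g)_{L_2}$ (well defined on all of $L_2$ because $g\in L_2$), the claimed limit is $\beta_\infty=(Y,g)_{L_2}/(X,g)_{L_2}$.

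Because $X\in\cC^1\subset\cC^{1/2}$, the denominator is $\|P_nX\|_\cC^2\to\|X\|_\cC^2=(X,g)_{L_2}>0$, which is deterministic and bounded away from zero. For the numerator I decompose $\mv Y_n=\mv m_n+\beta(\mv{\cS X})_n+\mv\epsilon_n$ and handle the three pieces. The mean term is immediate: since $m\in\cC^{1/2}$, the reproducing-property identity gives $\mv X_n^\top\mv\Sigma_n^{-1}\mv m_n=(P_nX,m)_\cC\to(X,m)_\cC=(m,g)_{L_2}$.

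The noise term I would treat by a martingale argument, which I expect to be the cleanest part. Put $W_n=\mv X_n^\top\mv\Sigma_n^{-1}\mv\epsilon_n$ and $\cF_n=\sigma(\epsilon(s_1),\dots,\epsilon(s_n))$. Since $\Cov(\epsilon(s_j),(\epsilon,g)_{L_2})=(\cC g)(s_j)=X(s_j)$, the coefficients $c=\mv\Sigma_n^{-1}\mv X_n$ are exactly those of the $L^2(\bbP)$-projection of $(\epsilon,g)_{L_2}$ onto $\spn\{\epsilon(s_i):i\le n\}$; hence $W_n=\pE[(\epsilon,g)_{L_2}\mid\cF_n]$ is a Doob martingale. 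Its mean-square error is $\pE[(W_n-(\epsilon,g)_{L_2})^2]=\|X\|_\cC^2-\mv X_n^\top\mv\Sigma_n^{-1}\mv X_n\to0$, and as $(\epsilon,g)_{L_2}$ is $\cF_\infty$-measurable (the locations are dense), L\'evy's upward theorem yields $W_n\to(\epsilon,g)_{L_2}=(\epsilon,X)_\cC$ $\bbP$-a.s.

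The hard part will be the deterministic smoothed-covariate term, namely $\mv X_n^\top\mv\Sigma_n^{-1}(\mv{\cS X})_n\to(\cS X,g)_{L_2}$. Writing it as $(\widehat{\cS X}_n,g)_{L_2}$ with $\widehat{\cS X}_n$ the kernel interpolant of $\cS X$, this is precisely the assertion that the interpolation residual vanishes when tested against the fixed $g\in L_2$. The obstruction is that $\cS X$ need not lie in $\cC^{1/2}$, so the functionals $\phi\mapsto\mv X_n^\top\mv\Sigma_n^{-1}\mv\phi_n$, although uniformly bounded and convergent on the dense subspace $\cC^{1/2}$, are unbounded on $L_2$ (they involve point evaluations), and a bare density argument therefore fails; this is exactly where the stronger hypothesis $X\in\cC^1$ rather than merely $X\in\cC^{1/2}$ enters, since it is what makes $g=\cC^{-1}X\in L_2$ and the target $(\cS X,g)_{L_2}$ finite. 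I would close this gap with a dedicated interpolation lemma establishing $\widehat{\cS X}_n\to\cS X$ in $L_2$ (equivalently, weak convergence against $g$), exploiting the boundedness $\|\cS X\|_\infty<\infty$ together with $P_n\to I$ on $\cC^{1/2}$ and the density of $\cC^1$ in $L_2$. Granting this, the numerator converges to $(m,g)_{L_2}+\beta(\cS X,g)_{L_2}+(\epsilon,g)_{L_2}=(Y,g)_{L_2}=(Y,X)_\cC$; since the denominator tends to $\|X\|_\cC^2>0$, the continuous-mapping theorem gives $\hat\beta_n\to(Y,X)_\cC/\|X\|_\cC^2=\beta_\infty$ $\bbP$-a.s. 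Finally, taking expectations and using $\pE[(\epsilon,g)_{L_2}]=0$ gives $\pE[\beta_\infty]=(m+\beta\cS X,X)_\cC/\|X\|_\cC^2$, which is finite because every $L_2$-pairing above is finite.
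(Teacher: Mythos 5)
Your proposal addresses only part (iii); parts (i) and (ii) of the statement are untouched. The paper proves those by a mechanism that appears nowhere in your argument: by the Feldman--Hajek theorem, $\mu_\beta$ and $\hat{\mu}_{\hat{\beta}}$ are equivalent precisely when $m+\beta\cS X-\hat{\beta}X\in\cC^{1/2}(L_2(\cX,\nu_\cX))$ and orthogonal otherwise; a theorem of Gikhman--Skorokhod then makes the log-likelihood ratio along the dense observation sequence converge $\bbP$-a.s.\ to $-\infty$ in the orthogonal case and to a finite log Radon--Nikodym derivative in the equivalent case, and a strict-concavity argument in $\hat{\beta}$ upgrades this pointwise dichotomy to convergence of the maximizers (under (i) equivalence holds only at $\hat{\beta}=\beta$, under (ii) only at $\hat{\beta}=0$). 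None of your RKHS/GLS machinery substitutes for this, so as a proof of the theorem as stated the proposal is incomplete on its face.

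For (iii) itself, your route --- direct analysis of the GLS quadratic forms --- is genuinely different from the paper's, which instead identifies the limiting log-likelihood ratio with the Radon--Nikodym derivative and takes its maximizer in $\hat{\beta}$. Several of your pieces are correct and arguably cleaner than the paper's treatment: the identity $\mv{X}_n^{\top}\mv{\Sigma}_n^{-1}\mv{h}_n=(P_nh,X)_{\cC}$ for $h\in\cC^{1/2}(L_2(\cX,\nu_\cX))$, the convergence of the denominator $\|P_nX\|_{\cC}^2\to\|X\|_{\cC}^2>0$, and especially the Doob-martingale argument for the noise term, where the $L^2$-error computation identifies the a.s.\ limit as $(\epsilon,X)_{\cC}$. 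However, the step you defer --- $\mv{X}_n^{\top}\mv{\Sigma}_n^{-1}(\mv{\cS X})_n\to(\cS X,\cC^{-1}X)_{L_2(\cX,\nu_\cX)}$ when $\cS X\notin\cC^{1/2}(L_2(\cX,\nu_\cX))$ --- is not a patchable technicality but the mathematical crux of case (iii), and it remains unproven. The weights $\mv{\Sigma}_n^{-1}\mv{X}_n$ define a quadrature rule that converges against every $h\in\cC^{1/2}(L_2(\cX,\nu_\cX))$, but extending to a bounded $\cS X$ outside that space requires uniform control either of the $\cC$-norm of the kernel interpolant $\widehat{\cS X}_n$ (which diverges precisely when $\cS X\notin\cC^{1/2}(L_2(\cX,\nu_\cX))$) or of the $\ell_1$-norm of the weights (a Lebesgue-constant bound that fails in general); $\|\cS X\|_{\infty}<\infty$ alone gives neither, and the density argument you sketch as a remedy is exactly the one you yourself correctly observe is blocked by unboundedness of the functionals on $L_2(\cX,\nu_\cX)$. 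As written, your argument therefore establishes (iii) only in the subcase $\cS X\in\cC^{1/2}(L_2(\cX,\nu_\cX))$, where the term equals $(P_n\cS X,X)_{\cC}\to(\cS X,X)_{\cC}$. It is worth noting that the paper's own proof of (iii) sidesteps your missing lemma by arguing at the level of the likelihood ratio (implicitly in the regime where the measures are equivalent); your decomposition has the merit of making the difficulty explicit, but it does not resolve it.
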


The first two cases of this result can  be extended to the situation  where measurement noise is included in the observation equation. That is, suppose that the observations are 
\begin{align}
	\label{eq:measerr}
	\tilde Y_i = Y(s_i) + \epsilon_i,
\end{align} 
where $\epsilon_i \sim \mathcal{N}\left(0,\sigma^2_e\right)$, and $Y(s_i)$ satisfy Assumption~\ref{ass:1}. We then have the following result.
\begin{Corollary}\label{cor:measerr}
	Suppose that $\tilde Y_i$ follow \eqref{eq:measerr} while our candidate model is that $Y_i = Y(s_i) + \epsilon_i,$, where $Y \sim \hat{\mu}_{\beta}$ with $\hat{\mu}_{\beta} = \pN(\beta X, \cC)$. Let $\hat{\beta}_n$ denote the maximum likelihood estimator (under the candidate model) based on the $n$ first observations, then:
	\begin{enumerate}[(i)]
		\item if $\cS X\notin \cC^{1/2}(L_2(\cX,\nu_\cX))$ and $(\cS X- X) \in \cC^{1/2}(L_2(\cX,\nu_\cX))$ then $\hat{\beta}_n \rightarrow \beta$ $\bbP$-a.s.;
		\item if $\cS X\in \cC^{1/2}(L_2(\cX,\nu_\cX))$ and $(\cS X- X) \notin \cC^{1/2}(L_2(\cX,\nu_\cX))$ then $\hat{\beta}_n \rightarrow 0$ $\bbP$-a.s..
		\end{enumerate}
\end{Corollary}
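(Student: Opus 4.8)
The plan is to run the proof of Theorem~\ref{thm1} with the single change that the field covariance $\mv{\Sigma}_n$ of the first $n$ observations is replaced by $\mv{\Sigma}_n^e := \mv{\Sigma}_n + \sigma_e^2\mathbf{I}$, and to pinpoint the one estimate that the nugget genuinely affects. Write $\mv{X}_n,\mv{m}_n,(\cS X)_n$ for the covariate, mean and smoothed covariate sampled at $s_1,\dots,s_n$. Since there is a single covariate, the estimator is the scalar
\begin{equation*}
	\hat{\beta}_n=\frac{\mv{X}_n^\top(\mv{\Sigma}_n^e)^{-1}\tilde{\mv{Y}}_n}{\mv{X}_n^\top(\mv{\Sigma}_n^e)^{-1}\mv{X}_n},\qquad \tilde{\mv{Y}}_n=\mv{m}_n+\beta(\cS X)_n+\mv{\xi}_n+\mv{e}_n,
\end{equation*}
where $\mv{\xi}_n\sim\pN(0,\mv{\Sigma}_n)$ is the sampled field and $\mv{e}_n\sim\pN(0,\sigma_e^2\mathbf{I})$ the measurement noise, independent of $\mv{\xi}_n$. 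Writing $A_n^e=\mv{X}_n^\top(\mv{\Sigma}_n^e)^{-1}\mv{X}_n$ and splitting the numerator accordingly, the result reduces to three facts: (a) the denominator diverges, $A_n^e\to\infty$; (b) the ``smooth'' cross terms in the numerator are $O(\sqrt{A_n^e})$; and (c) the noise contribution, divided by $A_n^e$, vanishes $\bbP$-a.s.

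Facts (b) and (c) are inherited from Theorem~\ref{thm1} with almost no work once (a) is known, and the real issue is (a). The upper bound half of the quadratic-form asymptotics is free: since $\mv{\Sigma}_n^e\succeq\mv{\Sigma}_n$ we have $(\mv{\Sigma}_n^e)^{-1}\preceq\mv{\Sigma}_n^{-1}$, so for any sampled $\mv{f}_n$,
\begin{equation*}
	\mv{f}_n^\top(\mv{\Sigma}_n^e)^{-1}\mv{f}_n\le \mv{f}_n^\top\mv{\Sigma}_n^{-1}\mv{f}_n\longrightarrow\|f\|_{\cC}^2,
\end{equation*}
the limit being the quadratic-form convergence underlying Theorem~\ref{thm1}; hence these forms stay bounded whenever $f\in\cC^{1/2}(L_2(\cX,\nu_\cX))$. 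The delicate point is that operator monotonicity runs the wrong way for a lower bound, and divergence of $A_n^e$ does \emph{not} follow from divergence of the noiseless form $A_n$: if the latter diverged through a single eigenvalue of $\mv{\Sigma}_n$ shrinking to $0$, then the corresponding term of $A_n^e$ would saturate at a finite value. I therefore expect establishing $A_n^e\to\infty$ to be the main obstacle, and I would resolve it using the infill spectral structure rather than a soft comparison. Concretely, rescale $\mv{\Sigma}_n^e=n\bigl(\tfrac1n\mv{\Sigma}_n+\tfrac{\sigma_e^2}{n}\mathbf{I}\bigr)$; the leading eigenvalues of the empirical operator $\tfrac1n\mv{\Sigma}_n$ converge to the eigenvalues $\lambda_j>0$ of $\cC$, so for each fixed $j$ the $j$-th eigenvalue of $\mv{\Sigma}_n$ grows like $n\lambda_j$ and eventually dominates the fixed nugget $\sigma_e^2$. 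Projecting onto the top $M$ empirical eigenspaces and using the convergence of the corresponding projections of $\mv{X}_n$ (the same input as in Theorem~\ref{thm1}) gives, for $X\notin\cC^{1/2}(L_2(\cX,\nu_\cX))$,
\begin{equation*}
	\liminf_{n\to\infty}A_n^e\ge\sum_{j=1}^{M}\frac{(X,e_j)_{L_2(\cX,\nu_\cX)}^2}{\lambda_j},
\end{equation*}
valid for every $M$, where $(\lambda_j,e_j)$ are the eigenpairs of $\cC$; letting $M\to\infty$ and using $X\notin\cC^{1/2}(L_2(\cX,\nu_\cX))$ forces $A_n^e\to\infty$. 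The essential feature is that the regularization seen at the level of the rescaled operator is $\sigma_e^2/n\to0$, so the nugget is asymptotically invisible to the growing modes.

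With (a) in hand the remaining steps mirror Theorem~\ref{thm1}. In both cases the hypotheses force $X\notin\cC^{1/2}(L_2(\cX,\nu_\cX))$, since that space is linear: were $X$ in it, then in case (i) $\cS X=(\cS X-X)+X$ would be too, and in case (ii) $\cS X-X$ would be a difference of two of its elements, each contradicting a hypothesis; hence $A_n^e\to\infty$. For the cross terms, $m\in\cC^{1/2}(L_2(\cX,\nu_\cX))$ by Assumption~\ref{ass:1}, while $\cS X-X$ (case i) respectively $\cS X$ (case ii) lies in $\cC^{1/2}(L_2(\cX,\nu_\cX))$ by hypothesis, so Cauchy--Schwarz together with the upper bound above gives $\mv{X}_n^\top(\mv{\Sigma}_n^e)^{-1}\mv{m}_n=O(\sqrt{A_n^e})$, as well as $\mv{X}_n^\top(\mv{\Sigma}_n^e)^{-1}((\cS X)_n-\mv{X}_n)=O(\sqrt{A_n^e})$ in case (i) and $\mv{X}_n^\top(\mv{\Sigma}_n^e)^{-1}(\cS X)_n=O(\sqrt{A_n^e})$ in case (ii), each of which is $o(A_n^e)$. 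Finally, since $\mv{\xi}_n$ and $\mv{e}_n$ are independent their sum is $\pN(0,\mv{\Sigma}_n^e)$, so the noise term $\mv{X}_n^\top(\mv{\Sigma}_n^e)^{-1}(\mv{\xi}_n+\mv{e}_n)$ is centered Gaussian with variance exactly $A_n^e$; its Cholesky (innovations) representation writes it as a sum of independent Gaussians, i.e.\ a martingale with quadratic variation $A_n^e\to\infty$, so the strong law for martingales yields that it is $o(A_n^e)$ almost surely. Collecting terms, $\hat\beta_n=\beta+o(1)\to\beta$ in case (i) and $\hat\beta_n=o(1)\to0$ in case (ii), $\bbP$-a.s.
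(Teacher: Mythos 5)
Your overall skeleton (GLS decomposition, Cauchy--Schwarz for the cross terms, martingale SLLN for the noise term) is sound, and you correctly identified that the whole corollary hinges on step (a), the divergence $A_n^e\to\infty$, since operator monotonicity only gives the useless upper bound $A_n^e\le A_n$. But your resolution of (a) has a genuine gap: the claim that the leading eigenvalues of $\tfrac1n\mv{\Sigma}_n$ converge to the eigenvalues of $\cC$, and that the projections of $\mv{X}_n$ onto the empirical eigenspaces converge to $(X,e_j)_{L_2(\cX,\nu_\cX)}$, is an equidistribution statement. It holds when the $s_i$ are i.i.d.\ draws from (normalized) $\nu_\cX$ or a comparably regular design, but Assumption~\ref{ass:1} only requires $\{s_i\}$ to be a \emph{dense} sequence. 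A dense sequence can have an empirical measure that converges to a different measure (e.g.\ put all but $\lfloor\sqrt{n}\rfloor$ of the first $n$ points in one half of the domain), or that does not converge at all; then $\tfrac1n\mv{\Sigma}_n$ approximates the covariance operator with respect to the wrong measure, or nothing, and your displayed lower bound is unjustified. This is not a removable technicality in your argument as written, because the entire force of (a) is carried by that spectral limit. The conclusion itself is true under denseness, but the natural elementary route is different: $A_n^e$ is nondecreasing with supremum equal to the squared norm of $X$ restricted to $\{s_i\}$ in the RKHS of the kernel $\varrho+\sigma_e^2\delta$; by Aronszajn's theorem this RKHS is the sum of the RKHS of $\varrho$ and an $\ell^2$-type space, so finiteness of $\sup_n A_n^e$ would force $X(s_i)=g(s_i)+h_i$ with $g\in\cC^{1/2}(L_2(\cX,\nu_\cX))$ and $h_i\to0$, and density of $\{s_i\}$ (plus continuity) then forces $X=g$, contradicting $X\notin\cC^{1/2}(L_2(\cX,\nu_\cX))$. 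That argument uses exactly the stated hypotheses and no equidistribution.

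For comparison, the paper does not rerun the GLS computation at all: its proof is two lines, combining the measure-theoretic proof of Theorem~\ref{thm1} (Feldman--Hajek dichotomy plus almost-sure divergence of the likelihood ratio for orthogonal measures) with the theorem of Stein (1999, Chapter 4, Theorem 6), which is precisely the statement that adding i.i.d.\ Gaussian measurement error preserves the equivalence/orthogonality classification of the mean perturbation --- i.e.\ it dispatches your step (a) at the level of measures. Relatedly, your remark that (b) and (c) are ``inherited from Theorem~\ref{thm1}'' mischaracterizes that proof: the paper's Theorem~\ref{thm1} is not proved via quadratic-form asymptotics, so the monotone convergence $\mv{f}_n^\top\mv{\Sigma}_n^{-1}\mv{f}_n\uparrow\|f\|_{\cC}^2$ for $f\in\cC^{1/2}(L_2(\cX,\nu_\cX))$ that you rely on must be cited (it is classical, due to Parzen/Aronszajn) or proved, not attributed to the theorem.
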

To understand the implications of these results, let us explore a few important choices for the operator $\cS$ in the following corollaries. 

\begin{Corollary}
If $X= \cS X +X_0$ for an additive perturbation satisfying $X_0 \notin  \cC^{1/2}(L_2(\cX,\nu_\cX)),$ then $\hat{\beta}_n \rightarrow 0 $. 
\end{Corollary}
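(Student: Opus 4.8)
The plan is to recognize this statement as a direct specialization of Theorem~\ref{thm1}(ii), so the whole task reduces to checking that the two membership conditions of that case hold under the stated hypothesis. Setting $X_0 = X - \cS X$, the assumption $X = \cS X + X_0$ immediately gives the identity $\cS X - X = -X_0$. Since the Cameron--Martin space $\cC^{1/2}(L_2(\cX,\nu_\cX))$ is defined as the range of $\cC^{1/2}$ and is therefore a linear subspace of $L_2(\cX,\nu_\cX)$, it is closed under multiplication by $-1$; hence $X_0 \notin \cC^{1/2}(L_2(\cX,\nu_\cX))$ is equivalent to $-X_0 = (\cS X - X) \notin \cC^{1/2}(L_2(\cX,\nu_\cX))$. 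This establishes the second condition required by Theorem~\ref{thm1}(ii).

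For the first condition, $\cS X \in \cC^{1/2}(L_2(\cX,\nu_\cX))$, I would appeal to the smoothing role of the operator $\cS$ in the regime this corollary is meant to describe: $\cS$ maps the rough covariate into the Cameron--Martin space, so $\cS X$ is smooth in the relevant sense. With both conditions in hand, Theorem~\ref{thm1}(ii) applies verbatim and delivers $\hat{\beta}_n \rightarrow 0$ $\bbP$-a.s. The main (and essentially only) obstacle is thus conceptual rather than computational: one must ensure that $\cS X$ genuinely lies in $\cC^{1/2}(L_2(\cX,\nu_\cX))$, which is precisely where the interpretation of $\cS$ as a smoother is used; everything else is the bookkeeping identity $\cS X - X = -X_0$ together with the linear-subspace argument above.

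It is worth noting in the write-up why the smoothness of $\cS X$ cannot simply be dropped, since this clarifies the exact force of the hypothesis. The condition $X_0 \notin \cC^{1/2}(L_2(\cX,\nu_\cX))$ by itself is compatible with configurations lying outside the scope of Theorem~\ref{thm1}(ii) --- for example, if $X$ were smooth while $\cS X$ were rough, then $X_0 = X - \cS X$ would still be rough, yet neither case (i) nor case (ii) of Theorem~\ref{thm1} would apply and the limit of $\hat{\beta}_n$ would not be pinned to zero. The point of the corollary is exactly that $\cS X \in \cC^{1/2}(L_2(\cX,\nu_\cX))$ combined with a rough residual $X_0$ forces $X = \cS X + X_0$ itself to be rough, placing us squarely in the case-(ii) configuration and yielding the asserted collapse of the estimator to zero.
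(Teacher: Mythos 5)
Your proof is correct and follows exactly the route the paper intends: the corollary is a direct specialization of Theorem~\ref{thm1}(ii), where the bookkeeping identity $\cS X - X = -X_0$ together with the fact that $\cC^{1/2}(L_2(\cX,\nu_\cX))$ is a linear subspace gives the second condition of that case. You are also right that the first condition, $\cS X \in \cC^{1/2}(L_2(\cX,\nu_\cX))$, is not literally stated in the corollary and must be read as an implicit hypothesis coming from the interpretation of $\cS$ as a smoother (the paper's surrounding discussion confirms this reading), so flagging it---and explaining why it cannot be dropped---is a legitimate clarification rather than a gap in your argument.
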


The following corollary provides the setup mentioned in the introduction. 
\begin{Corollary}
	If $\cS$ is a smoothing operator such that $\cS X \in \cC^{1/2}(L_2(\cX,\nu_\cX))$,  then $\hat{\beta}_n \rightarrow 0$ if $X \notin \cC^{1/2}(L_2(\cX,\nu_\cX))$.
\end{Corollary}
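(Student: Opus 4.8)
The plan is to reduce this statement directly to case (ii) of Theorem~\ref{thm1}. The hypotheses of the corollary already supply one of the two conditions required there, namely $\cS X \in \cC^{1/2}(L_2(\cX,\nu_\cX))$, so the only thing left to verify is the second condition, $(\cS X - X) \notin \cC^{1/2}(L_2(\cX,\nu_\cX))$. Once both membership conditions are in hand, Theorem~\ref{thm1}(ii) immediately yields $\hat{\beta}_n \to 0$ $\bbP$-a.s.

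The key observation is that the Cameron--Martin space $\cC^{1/2}(L_2(\cX,\nu_\cX))$ is a linear subspace of $L_2(\cX,\nu_\cX)$, being itself a Hilbert space under the inner product $(\,\cdot\,,\,\cdot\,)_\cC$. I would argue by contradiction: suppose that $(\cS X - X) \in \cC^{1/2}(L_2(\cX,\nu_\cX))$. Since $\cS X$ also lies in this space by assumption, closure under subtraction gives
$$
X = \cS X - (\cS X - X) \in \cC^{1/2}(L_2(\cX,\nu_\cX)),
$$
which contradicts the hypothesis $X \notin \cC^{1/2}(L_2(\cX,\nu_\cX))$. Hence $(\cS X - X) \notin \cC^{1/2}(L_2(\cX,\nu_\cX))$, and applying Theorem~\ref{thm1}(ii) concludes the argument.

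There is essentially no hard step here: the entire reduction rests on the linearity of the Cameron--Martin space, which is immediate from its definition as the range of $\cC^{1/2}$. The only point requiring minor care is that the corollary silently inherits the standing hypotheses of Assumption~\ref{ass:1} and Theorem~\ref{thm1} — in particular that $\cS$ maps $L_2(\cX,\nu_\cX)$ into itself with $\|\cS X\|_\infty < \infty$, and that $X \in L_2(\cX,\nu_\cX)$ with $\|X\|_\infty < \infty$ — so that Theorem~\ref{thm1}(ii) is genuinely applicable once the two membership conditions are established. Verifying that the phrase ``$\cS$ is a smoothing operator'' is consistent with these regularity requirements is the one modeling assumption I would make explicit, but it involves no real computation.
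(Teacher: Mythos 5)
Your proposal is correct and follows exactly the route the paper intends: the corollary is an immediate consequence of Theorem~\ref{thm1}(ii), with the only verification being that $\cS X \in \cC^{1/2}(L_2(\cX,\nu_\cX))$ and $X \notin \cC^{1/2}(L_2(\cX,\nu_\cX))$ together force $(\cS X - X) \notin \cC^{1/2}(L_2(\cX,\nu_\cX))$ by linearity of the Cameron--Martin space. The paper leaves this step implicit, so your explicit contradiction argument is exactly the missing detail, and it is sound.
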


From these results, we see that we obtain $\hat{\beta}_n \rightarrow 0$ if $X$ is not in the Cameron--Martin space of the covariance operator, whereas $\cS X$ is. Thus, to avoid severe bias in these spatial regression problems, it is 
crucial to know whether or not the covariate, or the smoothed covariate, is in the Cameron--Martin space of the covariance operator. In the following subsection, we provide several examples of these spaces for Gaussian Whittle--Mat\'ern fields.

\subsection{Fractional-order covariance operators}\label{sec:fractional}
In this section, we consider an application of the results of the previous section to Gaussian random fields with fractional order covariance operators, and provide some additional results not covered in the general case. 
For this, we need some additional notation. 
Suppose that $A : \cD(A) \subsetneq L_2(\cX, \nu_\cX) \rightarrow L_2(\cX, \nu_\cX)$ is a densely defined, self-adjoint and positive linear operator with a compact inverse \citep[see][for details]{bk-measure}. 
In this case, $A$ has a set of eigenvectors $\{e_j\}_{j\in\bbN}$ and corresponding positive eigenvalues $\{\lambda_j\}_{j\in\bbN}$ which can be taken in a non-decreasing order and which only accumulate at $\infty$. We suppose that the there exists constants $c,C>0$ and $\eta >0$ such that 
\begin{equation}\label{eq:weyl}
c j^\eta \leq \lambda_j \leq C j^{\eta}, \quad \forall j\in\bbN.
\end{equation}
For $\alpha>0$, the action of the fractional-order operator 
${A^\alpha :  \cD(A^\alpha) \subsetneq L_2(\cX, \nu_\cX) \rightarrow L_2(\cX, \nu_\cX)}$ 
is defined in the spectral sense as
$$
A^\alpha f := \sum_{j=1}^\infty \lambda_j^\alpha (f, e_j)_{L_2(\cX, \nu_\cX)} e_j,
$$
for $f\in \cD(A^\alpha)$. For $s>0$, the domain of $A^s$ is defined as $\dot{H}_A^{2s} := \cD(A^{s})$, where
$$
\cD(A^s) =  \{ f \in L_2(\cX, \nu_\cX) : \|A^{s} f\|_{L_2(\cX, \nu_\cX)}^2 = \sum_{j=1}^\infty \lambda_j^{2s} (f, e_j)_{L_2(\cX, \nu_\cX)}^2 < \infty\}.
$$
We let $\dot{H}_A^{-s}$ denote the dual space of $\dot{H}_A^{s}$ and note that 
for $s>0$, $\dot{H}_A^s$ is a separable Hilbert space with inner product $(f,g)_{s,A} := (A^{s/2}f, A^{s/2}g)_{L_2(\cX, \nu_\cX)}$.  Now, suppose that the covariance operator $\cC$ of the Gaussian process is $\cC = A^{-\alpha}$ for some $\alpha>0$, then the Cameron--Martin space is $C^{1/2}(L_2(\cX, \nu_\cX)) = \dot{H}_A^{\alpha}$. This allows us to quantify the requirements regarding the Cameron--Martin space through $\dot{H}_A^{\alpha}$, and we now provide some explicit examples of these for the popular Whittle--Mat\'ern fields and generalized Whittle--Mat\'ern fields on various spatial domains. 

\begin{example}\label{ex:Rd}
	Suppose that $\cX = D \subset \bbR^d$ is connected, bounded and open domain with Lipschitz boundary. Choosing $A = \kappa^2 - \nabla\cdot(a\nabla)$ for a bounded function $\kappa : \overline{D} \rightarrow \bbR$ and a symmetric Lipschitz and uniformly positive definite function $a : \overline{D} \rightarrow \bbR^d$, and endowing the operator with homogeneous Dirichlet or Neumann boundary conditions, results in a generalized Whittle--Mat\'ern field \citep[see, e.g.][]{BK2020rational}. With $a$ as the identity function and $\kappa$ as a positive constant, we obtain the standard Whittle--Mat\'ern fields as a special case. For these fields, one can characterize the Cameron--Martin spaces in terms of standard fractional Sobolev spaces $H^{\alpha}(\cX)$, see, e.g., \cite{bk-measure}. By these results, $f\in H^{\alpha}(D)$ is required to have $f\in \dot{H}^{\alpha}_A$, and we therefore have an explicit smoothness requirement on the functions in the Cameron--Martin space.
\end{example}	
	
\begin{example}
	Suppose that $\cX = M$ is a closed, connected, orientable, smooth and compact surface in $\bbR^3$. Choosing $A = \kappa^2 - \nabla_M\cdot(a\nabla_M)\Delta_{\cX}$, where $\nabla_M\cdot$ is the surface divergence and $\nabla_M$ the surface gradient, results in a generalized Whittle--Mat\'ern field on the surface. The characterization of the Cameron--Martin space can then be done in terms of standard fractional Sobolev spaces $H^\alpha(M)$ \citep{Herrmann2020}. These results can also be extended to more general Riemannian manifolds, and in particular are applicable to Gaussian Whittle--Mat\'ern fields on manifolds by considering $A = \kappa^2 - \Delta_M$, where  $\kappa>0$ is constant and $\Delta_M$ is the  Laplace--Beltrami operator \citep{borovitskiy2020mat,harbrecht2021multilevel}. 
\end{example}

\begin{example}
	Suppose that $\cX=\Gamma$ is a compact metric graph, such as a linear network. Choosing $A = \kappa^2 - \Delta_\Gamma$, where $\kappa>0$ and $\Delta_\Gamma$ is the Kirchhoff-Laplacian results in a Whittle--Mat\'ern field on $\Gamma$ for which \citet{BSW2022} provide a representation of the Cameron--Martin spaces. \citet{bolinetal_fem_graph} extends this to generalized Whittle--Mat\'ern fields on $\Gamma$, obtained by considering $A = \kappa^2 - \md(a\md)$, where $\kappa$ is a bounded function, $a$ is a positive Lipschitz function, and $\md$ is an operator acting as the derivative on the edges of $\Gamma$. 
\end{example}

We assume that the operator $\cS$ is of the form $\cS = A_S^{\gamma}$, where $A_S$ is an operator that diagonalizes with respect to the eigenvectors $\{e_j\}_{j\in\bbN}$, with corresponding positive eigenvalues $\{\lambda_{S,j}\}$ satisfying the same asymptotic growth as $A$:
$$
c_s j^{\eta} \leq \lambda_{S,j} \leq C_S j^{\eta}, \quad \forall j\in\bbN,
$$
for some constants $c_S,C_s>0$. Then, from Theorem \ref{thm1} it follows that:
\begin{Corollary}\label{cor:betawm}
	Suppose that the data is generated according to Assumption \ref{ass:1}, where $\cC = A^{-\alpha}$ for $\alpha>0$ and that $\cS = A_S^{\gamma}$ for $\gamma\in\mathbb{R}$. Further suppose that $X \in \dot{H}_A^{p}$, for some $p>0$, and that $X \notin \dot{H}_A^{\tilde{p}}$ for any $\tilde{p}>p$. The candidate model is $Y \sim \hat{\mu}_{\beta}$, where $\hat{\mu}_{\beta}=\pN(\beta  X,\cC)$. Let $\hat{\beta}_n$ denote the maximum likelihood estimator  based on the $n$ first observations, then:
	\begin{equation}\label{eq:beta_conv}
	\hat{\beta}_n  \rightarrow \begin{cases}
		\beta & \mbox{ if }  p < \alpha \text{ and } \gamma = 0,  \\
		0 & \mbox{ if }  p < \alpha \text{ and } 2\gamma \leq p-\alpha,  \\
		\beta_{\infty} = (Y,  X)_{\cC}/\| X\|_{\cC}^2 &  \text{if $p \geq 2\alpha$ and $  \gamma < 0$},
	\end{cases}
	\end{equation}
	$\bbP$-a.s., where $\beta_{\infty}$ is a random variable with $\pE(\beta_{\infty}) = \beta(\cS X,X)_{\cC}/\|X\|_{\cC}^2<\infty$.
	\end{Corollary}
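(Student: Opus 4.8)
The plan is to reduce Corollary~\ref{cor:betawm} to the three dichotomies of Theorem~\ref{thm1}; the only real work is to translate the membership conditions ``$\,\cdot \in \cC^{1/2}(L_2(\cX,\nu_\cX))\,$'' and ``$\,\cdot \in \cC^{1}(L_2(\cX,\nu_\cX))\,$'' into statements about the spectral exponents $p$, $\alpha$ and $\gamma$. Throughout I write $x_j := (X,e_j)_{L_2(\cX,\nu_\cX)}$.

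First I would set up the spectral bookkeeping. Since $\cC = A^{-\alpha}$, the text gives $\cC^{1/2}(L_2(\cX,\nu_\cX)) = \dot{H}_A^{\alpha}$ and $\cC^{1}(L_2(\cX,\nu_\cX)) = \dot{H}_A^{2\alpha}$, and a function $f=\sum_j f_j e_j$ lies in $\dot{H}_A^{s}$ iff $\sum_j \lambda_j^{s} f_j^2 < \infty$. The two-sided Weyl bound \eqref{eq:weyl} gives $\lambda_j^{s} \asymp j^{\eta s}$ for every $s\in\bbR$ (for $s<0$ the two constants swap roles but the bound stays two-sided), so $f \in \dot{H}_A^{s}$ iff $\sum_j j^{\eta s} f_j^2 < \infty$. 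Feeding in the hypotheses $X \in \dot{H}_A^{p}$ and $X\notin\dot{H}_A^{\tilde p}$ for all $\tilde p>p$ yields the sharp statement that $\sum_j j^{\eta t} x_j^2 <\infty$ iff $t \le p$; that is, $X$ has critical exponent exactly $p$, with the critical value included.

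Next I would read off the smoothness of $\cS X$ and of $\cS X - X$. As $\cS = A_S^{\gamma}$ acts diagonally with $\lambda_{S,j}\asymp j^{\eta}$, one has $\lambda_{S,j}^{\gamma}\asymp j^{\eta\gamma}$ for every $\gamma$, so the coefficients of $\cS X$ are $\lambda_{S,j}^{\gamma}x_j$ and $\sum_j \lambda_j^{s}\lambda_{S,j}^{2\gamma}x_j^2 \asymp \sum_j j^{\eta(s+2\gamma)}x_j^2$; by the previous step this is finite iff $s \le p-2\gamma$, i.e. $\cS X \in \dot{H}_A^{s}$ iff $s\le p-2\gamma$. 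For the difference $\cS X - X = \sum_j(\lambda_{S,j}^{\gamma}-1)x_j e_j$ the behaviour splits on the sign of $\gamma$: if $\gamma>0$ then $\lambda_{S,j}^{\gamma}-1\asymp j^{\eta\gamma}$ and the difference inherits the critical exponent $p-2\gamma$ of $\cS X$; if $\gamma<0$ then $\lambda_{S,j}^{\gamma}\to 0$, so $\lambda_{S,j}^{\gamma}-1\to -1$ and $|\lambda_{S,j}^{\gamma}-1|\asymp 1$ for large $j$, giving the difference the critical exponent $p$ of $X$; and if $\gamma=0$ then $\cS=I$ and $\cS X - X = 0$ lies in every $\dot{H}_A^{s}$. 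This sign-dependent analysis of $\cS X - X$, together with tracking whether the critical exponent is attained, is the one delicate point: it is tempting but wrong (for $\gamma<0$) to assign $\cS X - X$ the smoothness of $\cS X$.

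Finally I would match each case to Theorem~\ref{thm1}, recalling that the Cameron--Martin space is $\dot{H}_A^{\alpha}$. If $p<\alpha$ and $\gamma=0$ then $\cS X=X\notin\dot{H}_A^{\alpha}$ while $\cS X - X=0\in\dot{H}_A^{\alpha}$, so part (i) gives $\hat\beta_n\to\beta$. If $p<\alpha$ and $2\gamma\le p-\alpha$ then (the constraint forces $\gamma<0$, since $p-\alpha<0$) we have $\cS X\in\dot{H}_A^{\alpha}\iff \alpha\le p-2\gamma\iff 2\gamma\le p-\alpha$, which holds, while $\cS X - X$ has critical exponent $p<\alpha$ and so lies outside $\dot{H}_A^{\alpha}$; hence part (ii) gives $\hat\beta_n\to0$. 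If $p\ge 2\alpha$ and $\gamma<0$ then $X\in\dot{H}_A^{2\alpha}=\cC^{1}(L_2(\cX,\nu_\cX))$, so part (iii) gives $\hat\beta_n\to\beta_\infty=(Y,X)_\cC/\|X\|_\cC^2$; moreover $\gamma<0$ with $p\ge2\alpha$ yields $\alpha+\gamma<\alpha\le p$ and $2\gamma<0\le p-\alpha$, so both $X$ and $\cS X$ lie in $\dot{H}_A^{\alpha}$ and $(\cS X,X)_\cC\asymp\sum_j j^{\eta(\alpha+\gamma)}x_j^2<\infty$, which is exactly what is needed to read off $\pE(\beta_\infty)=\beta(\cS X,X)_\cC/\|X\|_\cC^2<\infty$ from the expectation formula of Theorem~\ref{thm1}(iii) (with no additional fixed offset $m$). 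The main obstacle, as flagged above, is the careful exponent bookkeeping for $\cS X - X$ and at the boundary values; everything else is a direct substitution.
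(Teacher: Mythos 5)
Your proof is correct and follows essentially the same route as the paper's: both reduce the corollary to the three cases of Theorem~\ref{thm1} by identifying $\cC^{1/2}(L_2(\cX,\nu_\cX))$ with $\dot{H}_A^{\alpha}$ and $\cC^{1}(L_2(\cX,\nu_\cX))$ with $\dot{H}_A^{2\alpha}$, and then tracking the critical smoothness exponents of $X$, $\cS X$ and $\cS X - X$. The only difference is presentational: where the paper invokes an isomorphism lemma from the literature to conclude $\cS X \in \dot{H}_A^{p-2\gamma}$ and leaves the claim $\cS X - X \notin \dot{H}_A^{\alpha}$ implicit, you carry out the same spectral bookkeeping (including the sign-dependent analysis of the coefficients $\lambda_{S,j}^{\gamma}-1$ and the attainment of the critical exponent at the boundary $2\gamma = p-\alpha$) directly from the Weyl bounds.
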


We can note that some combinations of $p, \alpha$ and $\gamma$ are missing in the result. It is more complicated to get a general result in these cases as the behavior appear to depend on how the observations are taken. One situation where we can obtain a complete characterization is if the observations are eigenbasis observations, which is a common assumption is several theoretical investigations of maximum likelihood estimators, \citep[see, e.g.,][]{cialenco2018}.

	\begin{Proposition}\label{The:betan2}
		Suppose that $\cC = A^{-\alpha}$ for $\alpha>0$ and that $\cS = A_S^{\gamma}$ for $\gamma\in\mathbb{R}$. Further suppose that $X \in \dot{H}_A^{p}$ for $p>0$, and that $X \notin \dot{H}_A^{\widetilde{p}}$ for $\widetilde{p}>p$.  The candidate model is $Y \sim \hat{\mu}_{\beta}$, where $\hat{\mu}_{\beta}=\pN(\beta  X,\cC)$. Let $\hat{\beta}_n$ denote the maximum likelihood estimator  based on the $n$ observations 
		$Y_i= \left(Y,e_i\right)_{L_2(\cX, \nu_\cX)}$ for $i=1,\ldots,n$ then 
		$$
		\hat{\beta}_n  \rightarrow \begin{cases}
			\beta 	& \text{if $p <\alpha$ and $\gamma = 0$,} \\
			0 		& \text{if $p <\alpha$ and $\gamma < 0$,} \\
		sign(\beta)	\infty 	& \text{if ($p <\alpha$ and $\gamma > 0$) or ($p \geq \alpha$ and $\gamma > p - \alpha$),} \\
			\beta_{\infty} = (Y,  X)_{\cC}/\| X\|_{\cC}^2& \text{if $p \geq \alpha$ and $\gamma \leq p - \alpha$},
		\end{cases}
		$$
		$\bbP$-a.s., where $\beta_{\infty}$ is finite with $\pE(\beta_{\infty}) = \beta(\cS X,X)_{\cC}/\|X\|_{\cC}^2$.
	\end{Proposition}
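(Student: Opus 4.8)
The plan is to use the eigenbasis observations to reduce $\hat\beta_n$ to an explicit weighted-least-squares ratio and then analyse its deterministic and stochastic pieces separately. Under the candidate model $\hat\mu_\beta=\pN(\beta X,\cC)$ with $\cC=A^{-\alpha}$, the coordinates $Y_i=(Y,e_i)_{L_2(\cX,\nu_\cX)}$ are independent with $Y_i\sim\pN(\beta x_i,\lambda_i^{-\alpha})$, where $x_i:=(X,e_i)_{L_2(\cX,\nu_\cX)}$, since $\Cov((Y,e_i)_{L_2(\cX,\nu_\cX)},(Y,e_j)_{L_2(\cX,\nu_\cX)})=(\cC e_i,e_j)_{L_2(\cX,\nu_\cX)}=\lambda_i^{-\alpha}\delta_{ij}$. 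Maximising the Gaussian log-likelihood over $\beta$ (the variances being known) yields the closed form
\[
\hat\beta_n=\frac{\sum_{i=1}^n\lambda_i^\alpha x_i Y_i}{\sum_{i=1}^n\lambda_i^\alpha x_i^2},
\]
which is well defined for large $n$ because $X\neq0$. Substituting the true law, $Y_i=m_i+\beta\lambda_{S,i}^\gamma x_i+\xi_i$ with $m_i:=(m,e_i)_{L_2(\cX,\nu_\cX)}$ and independent $\xi_i\sim\pN(0,\lambda_i^{-\alpha})$, decomposes the estimator as $\hat\beta_n=(N_n+\beta B_n+M_n)/D_n$ with $D_n=\sum_{i\le n}\lambda_i^\alpha x_i^2$, $B_n=\sum_{i\le n}\lambda_i^\alpha\lambda_{S,i}^\gamma x_i^2$, $N_n=\sum_{i\le n}\lambda_i^\alpha x_i m_i$ and $M_n=\sum_{i\le n}\lambda_i^\alpha x_i\xi_i$.

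First I would settle the deterministic asymptotics. The two-sided Weyl bounds $\lambda_j,\lambda_{S,j}\asymp j^\eta$ give $\lambda_i^\alpha\lambda_{S,i}^\gamma\asymp\lambda_i^{\alpha+\gamma}$, so $B_n\asymp\sum_{i\le n}\lambda_i^{\alpha+\gamma}x_i^2$; together with $X\in\dot{H}_A^p$ and $X\notin\dot{H}_A^{\tilde p}$ for every $\tilde p>p$, this shows $D_n$ converges to $\|X\|_\cC^2$ iff $p\ge\alpha$ and diverges to $+\infty$ iff $p<\alpha$, and that $B_n$ converges to $(\cS X,X)_\cC$ iff $\gamma\le p-\alpha$. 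For the ratio $B_n/D_n$ in the regimes with $D_n\to\infty$ I would invoke the elementary weighted-average (Toeplitz) fact that for $w_i\ge0$ with $\sum w_i=\infty$ and $c_i\to\ell\in[0,\infty]$ one has $\sum_{i\le n}w_ic_i/\sum_{i\le n}w_i\to\ell$; taking $w_i=\lambda_i^\alpha x_i^2$ and $c_i=\lambda_{S,i}^\gamma$ gives $B_n/D_n\to1,0,\infty$ according as $\gamma=0,\gamma<0,\gamma>0$.

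The main obstacle is the almost-sure control of the stochastic term $M_n$, a sum of independent centred Gaussians with $\Var(M_n)=\sum_{i\le n}\lambda_i^{2\alpha}x_i^2\lambda_i^{-\alpha}=D_n$. When $p\ge\alpha$, so that $D_n$ is bounded, $M_n$ is an $L^2$-bounded martingale and hence converges $\bbP$-a.s. and in $L^2$ to a finite $M_\infty$ with $\pE M_\infty=0$, while $N_n\to(m,X)_\cC$ by Cauchy--Schwarz because $m\in\cC^{1/2}(L_2(\cX,\nu_\cX))=\dot{H}_A^\alpha$. When $p<\alpha$, so that $D_n\uparrow\infty$, I would prove $M_n/D_n\to0$ $\bbP$-a.s.\ by applying Kronecker's lemma to $\sum_i\lambda_i^\alpha x_i\xi_i/D_i$, whose a.s.\ convergence follows from Kolmogorov's convergence theorem since $\sum_i\Var(\lambda_i^\alpha x_i\xi_i)/D_i^2=\sum_i(D_i-D_{i-1})/D_i^2<\infty$; the same Cauchy--Schwarz bound $|N_n|\le\|m\|_\cC\sqrt{D_n}$ gives $N_n/D_n\to0$. (Equivalently one may quote the martingale strong law $M_n/\langle M\rangle_n\to0$ on $\{\langle M\rangle_\infty=\infty\}$, with $\langle M\rangle_n=D_n$.)

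Finally I would assemble the four regimes. If $p<\alpha$ then $N_n/D_n\to0$ and $M_n/D_n\to0$, so $\hat\beta_n\to\beta\lim_n B_n/D_n$, equal to $\beta$ for $\gamma=0$, to $0$ for $\gamma<0$, and to $\sgn(\beta)\infty$ for $\gamma>0$. If $p\ge\alpha$ then $D_n\to\|X\|_\cC^2>0$ and $N_n,M_n$ converge; if moreover $\gamma>p-\alpha$ then $B_n\to+\infty$, the numerator diverges, and $\hat\beta_n\to\sgn(\beta)\infty$, whereas if $\gamma\le p-\alpha$ every term converges and $\hat\beta_n\to(N_\infty+\beta B_\infty+M_\infty)/\|X\|_\cC^2$ with $N_\infty=(m,X)_\cC$ and $B_\infty=(\cS X,X)_\cC$. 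Recognising the numerator as the Paley--Wiener functional $(Y,X)_\cC=\lim_n\sum_{i\le n}\lambda_i^\alpha x_i Y_i$ identifies the limit as $\beta_\infty=(Y,X)_\cC/\|X\|_\cC^2$, and taking expectations with $\pE M_\infty=0$ gives $\pE\beta_\infty=((m,X)_\cC+\beta(\cS X,X)_\cC)/\|X\|_\cC^2$, which reduces to the stated $\beta(\cS X,X)_\cC/\|X\|_\cC^2$ when $m=0$.
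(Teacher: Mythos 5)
Your proof is correct, and it shares the paper's skeleton: the same closed form $\hat\beta_n=\sum_{i\le n}\lambda_i^\alpha x_iY_i\big/\sum_{i\le n}\lambda_i^\alpha x_i^2$ for eigenbasis observations, the same split of the numerator into a deterministic part $\beta B_n$ plus a Gaussian noise part $M_n$, and the same case analysis in $(p,\gamma)$. The differences lie in the two lemmas you invoke at the critical steps, and both are improvements on what the paper actually writes. First, for the noise term in the regime $D_n\to\infty$ (i.e.\ $p<\alpha$), the paper only computes $\Var(M_n/D_n)=1/D_n\to0$, which by itself yields convergence in probability, not the almost-sure convergence asserted in the proposition; your route via Kolmogorov's convergence theorem applied to $\sum_i\lambda_i^\alpha x_i\xi_i/D_i$ followed by Kronecker's lemma (equivalently, the martingale strong law with $\langle M\rangle_n=D_n$) is exactly what is needed to get the $\bbP$-a.s.\ claim, so you have closed a genuine gap. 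Second, for the deterministic ratio $B_n/D_n$, the paper treats $\gamma<0$ with the bound $B_n/D_n\le\lambda_{S,n}^\gamma$, which is reversed as written: for $\gamma<0$ and nondecreasing eigenvalues one has $\lambda_{S,i}^\gamma\ge\lambda_{S,n}^\gamma$ for $i\le n$, so $\lambda_{S,n}^\gamma$ is a \emph{lower} bound for that weighted average; the conclusion is still true, and the correct justification is precisely your Toeplitz weighted-average lemma, which also treats $\gamma<0$, $\gamma=0$ and $\gamma>0$ uniformly (the paper's $m<n$ splitting argument for $\gamma>0$ is the Toeplitz proof in disguise). A final, cosmetic, difference: you carry the mean $m$ of Assumption~\ref{ass:1} through the computation and set $m=0$ only at the end to recover the stated expectation of $\beta_\infty$, whereas the paper silently takes $m=0$ from the start (as the formula $\pE(\beta_\infty)=\beta(\cS X,X)_{\cC}/\|X\|_{\cC}^2$ presupposes). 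In short: same architecture as the paper, but your choices of lemmas are the ones that make the almost-sure statement rigorous.
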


This result should carry over to any reasonable fill in limit sequence of observations such that $\mv{X}^{\top}\mv{\Sigma}^{-1}\mv{Y}$ and $\mv{X}^{\top}\mv{\Sigma}^{-1}\mv{X}$ in \eqref{eq:gls}  converges to $(Y, X)_{\cC}<\infty$ and $\| X\|_{\cC}^2<\infty$ as $n\rightarrow\infty$.
	Thus, we conjecture that for most practical situations, the results obtained for eigenbasis observations hold also for the point observations, in particular that $|\hat{\beta}_n| \rightarrow \infty$ if 
	$p <\alpha$ and $\gamma > 0$ or if $p \geq \alpha$ and $\gamma > p - \alpha$. This is also what we observe in Section~\ref{sec:simstudy2} for point observations. 

\section{Simulation experiments}
\label{sec:sim}
To illustrate the results of the previous section, we here perform a few simulation experiments. In the first experiment we take a time series approach and use a lowess smoother \citep{cleveland1979} to illustrate the effect of the results. In the second experiment, we consider a setup similar to the spatial data application in Section~\ref{sec:app}.

\subsection{A time series example}

 Suppose that we are given data 
$
Y_i = \cS X(s_i) + Z(s_i)
$
where $s_i \in [0,10]$, and $Z(s)$ is a centered Gaussian process with a Mat\'ern covariance with $\kappa=\nu=1$ and $\sigma=0.1$. We generate $\cS X$ by first simulating a realization $ X(s)$ of a centered Gaussian process with a Mat\'ern covariance with $\kappa=\nu=1$ and $\sigma=0.4$, and then smoothing it with a lowess smoother with smoother span $0.1$. An example of the process and covariates can be seen in Figure~\ref{fig:ex1}.

\begin{figure}[t]
	\includegraphics[width=\linewidth]{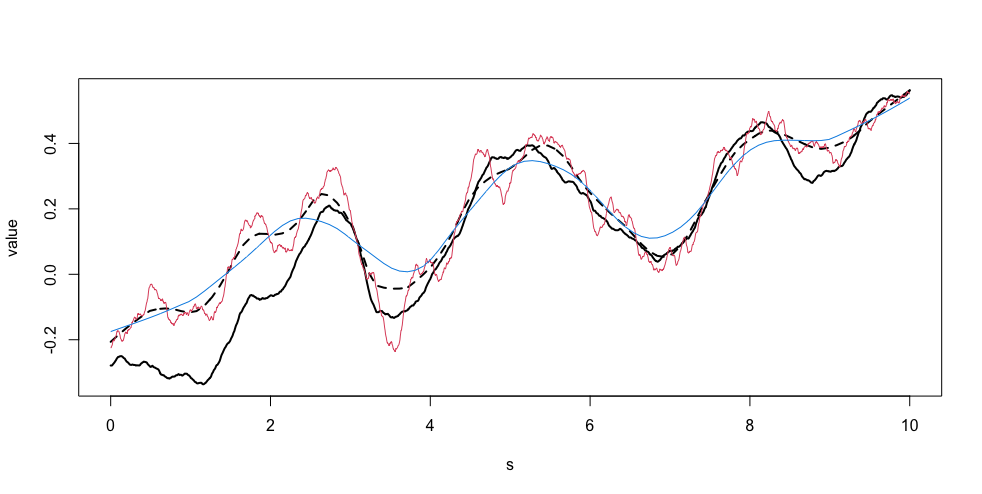}
	\caption{Simulation of $Y(s) = \cS X(s) + Z(s)$ (black solid), $\cS X(s)$ (black dashed), the non-smoothed covariate $X(s)$ (red), and the smoothed covariate $\hat{\cS} X(s)$ (blue).}
	\label{fig:ex1}
\end{figure}

As a first experiment, we assume that we know the covariance function of $Z$ and estimate two models 
\begin{equation*}
\text{Model 1} : Y_i = \beta  X(s_i) + Z(s_i), \qquad
\text{Model 2} : Y_i = \beta  \hat{\cS} X(s_i) + Z(s_i),
\end{equation*}
where the first model uses the unmodified, rough, covariate $ X$, and  the second model uses $\hat{\cS} X$ which is a smoothed version of $X$, obtained by a different lowess smoother with smoother span $0.2$. 

For both models, we estimate $\beta$ using maximum likelihood using an increasing  amount of observations, regularly spaced in the interval $[0,1]$. The resulting estimates can be seen in the columns for Experiment 1 in Table~\ref{tab1}, where the values are obtained as averages of the results for $10$ repetitions of the experiment. 

\begin{table}[t]
	\centering
	\begin{tabular}{@{}lcc cc cc@{}}
		& \multicolumn{2}{c}{Experiment 1} & \multicolumn{2}{c}{Experiment 2} & \multicolumn{2}{c}{Experiment 3} \\
		\cmidrule(lr){2-3}  \cmidrule(lr){4-5}  \cmidrule(l){6-7}
		$n $ & Model 1 & Model 2 & Model 1 & Model 2  & Model 1 & Model 2 \\
		\hline
		$128$ & $0.108$ & $1.059$ & $0.027$ & $1.043$ & $0.026$ & $1.006$\\ 
		$256$ & $0.052$ & $1.049$ & $0.018$ & $1.081$ & $0.042$ & $1.023$\\ 
		$512$ & $0.026$ & $1.046$ & $0.011$ & $1.094$ & $0.034$ & $1.04$\\ 
		$1024$ & $0.014$ & $1.048$ & $0.009$ & $1.097$ & $0.034$ & $1.039$\\ 
		\hline
	\end{tabular}
\caption{Estimates of the regression coefficient $\beta$ for the three simulation experiments. Model 1 has a rough covariate whereas Model 2 has a smooth covariate. Each number is an average over $10$ different datasets. }
\label{tab1}
\end{table}

As a second experiment, we redo the same simulations, but estimate the parameters $\kappa,\sigma,\nu$ of the covariance function jointly with $\beta$ when performing the maximum likelihood estimation. The results are shown in the columns for Experiment 2 in Table~\ref{tab1}.
Finally, as a third experiment, we extend the model for $Z$ by adding a nugget effect. That is, we use a covariance function $r(h) = \rho(h) + \sigma_e^2 1(h=0)$, where $\sigma_e^2$ is the nugget representing the variance of measurement error. We generate the data $Y_i$ assuming this covariance for $Z$, where we choose $\sigma_e = 0.01$, and estimate all parameters $\kappa,\sigma,\nu,\sigma_e$ of the covariance function jointly with $\beta$ when performing the maximum likelihood estimation. The results of this experiment are shown in the columns for Experiment 3 in Table~\ref{tab1}.

In all three experiments, the estimate of $\beta$ tends to zero as the number of observations increases for the first model, whereas the estimate is stable for the second model, despite that it uses the wrong covariate. This result is expected from the results of the previous section, but might seem  counterintuitive given that $X$ has a high correlation with the observed data (it was, for example, $0.951$ on average for the first experiment).

\subsection{A spatial example}\label{sec:simstudy2}
We now illustrate the results in an example related to the application in the next section. 
We assume that $\epsilon$ in \eqref{eq:main_model} is a Gaussian Mat\'ern field with range $\kappa=0.4$, variance $\sigma^2=1.3^2$, and smoothness $\nu=2$ on the domain in Figure~\ref{fig:data}. These parameter values were chosen so that they are similar to parameters for the application. 
We further assume that $X$ also is a Gaussian Mat\'ern field, independent of $\epsilon$, with the same parameters $\kappa$ and $\sigma$, and a different smoothness parameter $\nu_X$. Finally, we assume that $\cS$ is chosen so that $\cS X$ also is a Gaussian Mat\'ern field with the same parameters $\kappa$ and $\sigma$ and a different smoothness parameter $\nu_{SX}$.   

We consider 620 observation locations as in the Figure \ref{fig:data} for the application.  If we let $\mv{Y}$ denote the observations at these locations, the model specifically is
\begin{align*}
	\mv{Y} &=  \cS\mv{X} +   \mv{\Sigma}_{\epsilon}^{1/2} \mv{Z}_1,
\end{align*}
where $\cS\mv{X} =   \mv{\Sigma}_{\cS X}^{1/2}\ \mv{Z}_2$ and 
$\mv{X} =  \mv{\Sigma}_X^{1/2}\ \mv{Z}_2$. Here, $\mv{Z}_i$ are vectors with independent standard Gaussian variables and $\mv{\Sigma}_{\epsilon},\mv{\Sigma}_X$ and $,\mv{\Sigma}_{\cS X}$ denote the covariance matrices corresponding to Mat\'ern covariance functions with parameters $\kappa = 0.4$,  $\sigma = 1.3$ and smoothness $\nu = 2,\nu_X,$ and $\nu_{\cS X}$ respectively. 
For $\nu_{\cS X}$ we consider the two cases $\nu_{\cS X}=\nu$ and $\nu_{\cS X}=\nu-0.5$. In the first, the smoothness of $\mv{\epsilon}$ and $\cS\mv{X} $ are equal which is likely the situation one is in when estimating the smoothness from data. In the second case, $\cS\mv{X}$ is rougher than $\mv{\epsilon}$, which could occur if the smoothness of $\mv{\epsilon}$ is kept fixed (e.g., if an exponential or a squared exponential covariance is used).
  
 We subsample $\mv{Y}$, $\mv{X}$ and $\cS\mv{X}$ by sampling $n$ out of the 620 observation locations uniformly at random and compute $\hat{\beta}_n$ using \eqref{eq:gls} where $n$ is the number of observations in the sample. These estimates are computed for different values of $\nu_X \in [0.5,10]$. 
 
 In the notation of Section~\ref{sec:fractional}, the smoothness parameters correspond to 
 $\alpha = \nu+1$, 
 ${p = \nu_X}$ and 
 $\gamma = (\nu_X - \nu_{SX})/2$. Thus, 
 if Proposition~\ref{The:betan2} also would hold for point observations,
 we expect that
 $$
\text{case 1: } \hat{\beta}_n\rightarrow 
 \begin{cases}
0 & \text{if $\nu_X<2$,}\\
\beta & \text{if $\nu_X=2$,}\\
\infty & \text{if $2< \nu_X< 3$,}\\
\beta_{\infty} & \text{if $\nu_X\geq 3$,}
 \end{cases}
 \quad \text{case 2: } \hat{\beta}_n\rightarrow 
 \begin{cases}
0 & \text{if $\nu_X<1.5$,}\\
\beta & \text{if $\nu_X=1.5$,}\\
\infty & \text{if $1.5< \nu_X< 3$,}\\
\beta_{\infty} & \text{if $\nu_X\geq 3$,}
 \end{cases}
 $$
with $\pE(\beta_{\infty}) = \beta(\cS X, X)_{\cC}/\|X\|_{\cC}<\infty$. 

\begin{figure}[t]
	\centering
	\includegraphics[width=0.45\textwidth]{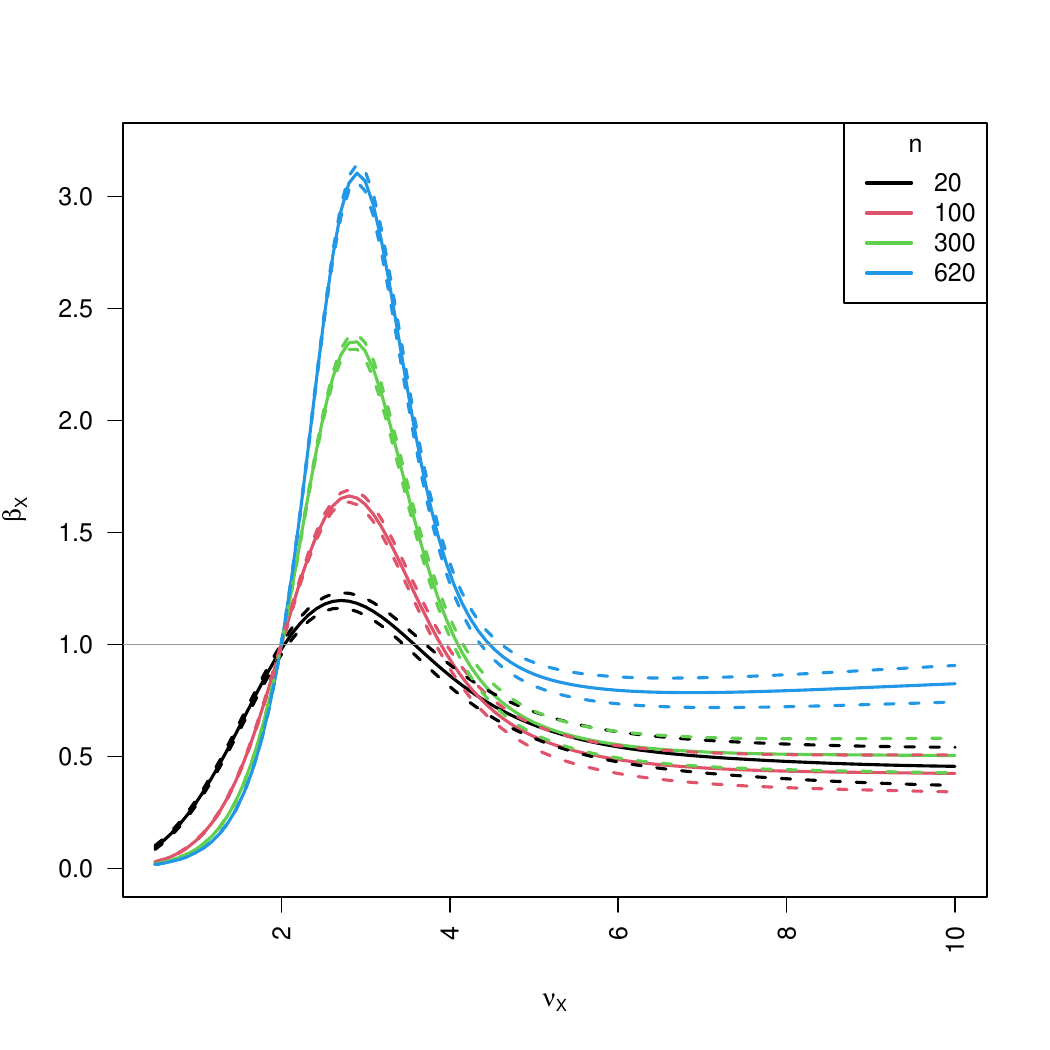}
	\includegraphics[width=0.45\textwidth]{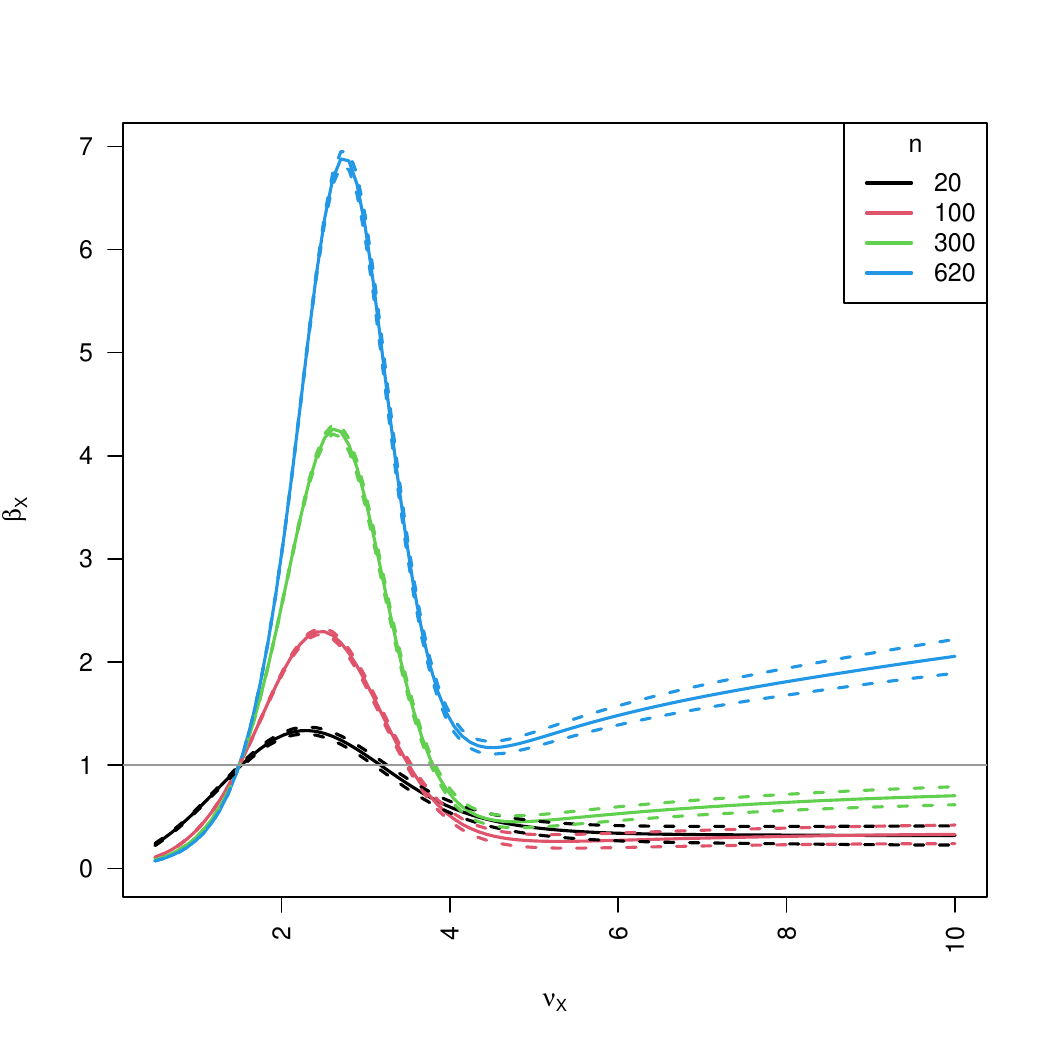}
	\caption{The effect of smoothing the covariate for the cases $\nu_{\cS X}=\nu$ (left) and $\nu_{\cS X}=\nu-0.5$ (right). The solid line is the mean effect of $400$ simulations, and the dashed lines represent  Monte Carlo $95\%$ confidence bands. }
	\label{fig:nuX}
\end{figure}
 We repeat the experiment 400 times, and the result is seen in Figure \ref{fig:nuX}. One can observe the result from Proposition~\ref{The:betan2} emerging as the sample size grows.  In particular when $\nu_X < 2$ for the first case, and $\nu_X < 1.5$ for the second, the estimator goes to zero. When $\nu_X \in (2,3)$ for the first case, and $\nu_X \in (1.5,3)$ in the second, there is a positive bias. As the sample size is not that large in this example, it is not clear if this bias will be unbounded if $n$ was increased further, but the bias is clearly large enough that it could have serious implications for the interpretation of the results.

\section{An application to reanalysis data}
\label{sec:app}
To illustrate the theory on real data we take a data set of precipitation and temperature reanalysis. The variables are the average summer (June, July and August) temperature, $T(s,t)$, and the cube-root precipitation, $P(s,t)$, over the Midwest United states for 24 years (1981-2004). 
To simplify interpretation we standardize the data for each year.
The data for one year is shown in Figure \ref{fig:data}. The empirical correlation between the two variables is $-0.67$. This data was studied earlier in \cite{crosscov} for examining the joint distribution through various multivariate models, and they in particular studied a linear coregionalization model,
\begin{align*}
	T(s,t) &= a_{11} Z_1(s,t), \\
	P(s,t) &= a_{12} Z_1(s,t) + a_{22} Z_2(s,t),  
\end{align*}
where $Z_1,Z_2$ are independent Gaussian random fields. The formulation is motivated by ``We opt for this formulation since temperature is expected to be smoother than precipitation,...''. Therefore the data seems ideal to study in our context as differentiability of the field determines the Cameron--Martin space of the Mat\'ern covariance.

\begin{table}[t]
	\centering
	\begin{tabular}{@{}lcr cr @{}}
		& \multicolumn{2}{c}{Temperature model} & \multicolumn{2}{c}{Precipitation model} \\
		\cmidrule(lr){2-3}\cmidrule(lr){4-5}
		$n (space)$ & $\beta_P (CI95\%)$ & RMSE & $\beta_T(CI95\%)$ &  RMSE   \\
		\hline
		$15$ & $-0.21\,(-0.26,-0.16)$ & $0.19$ & $-0.55\,(-0.67,-0.44)$ & $0.41$   \\
		$50$ & $-0.18\,(-0.20,-0.16)$ & $0.18$ & $-0.73\,(-0.81,-0.65)$& $0.42$   \\
		$100$ & $-0.15\,(-0.16,-0.14)$ &$0.17$ & $-0.80\,(-0.86,-0.73)$& $0.42$   \\
		$250$ & $-0.11\,(-0.12,-0.11)$ &$0.17$ & $-1.07\,(-1.13,-1.01)$& $0.43$   \\
		$620$ & $-0.09\,(-0.09,-0.09)$& $0.17$ & $-1.57\,(-1.60,-1.50)$& $0.46$   \\
		\hline
	\end{tabular}
	\caption{Results for modelling temperature with precipitation as covariate and precipitation with temperature as covariate. The table shows the estimated regression coefficients, their $95\%$ confidence intervals, and the root mean square prediction error for the prediction set.}
	\label{tab:res}
\end{table}
In order to study how the estimators behave as functions of sample size, we subsample $n$ of the $620$ spatial locations, keeping the data from all $24$ years for the subsampled locations. 
We explore both predicting temperature using precipitation as a covariate,
$$
T(s,t) = \beta_0+ \beta_P P(s,t)+   Z_T(s,t;\theta_T),
$$
and predicting precipitation using temperature as a covariate,
$$
P(s,t) =\beta_0+ \beta_T T(s,t)+   Z_P(s,t;\theta_P).
$$
Here $Z_T$ and $Z_P$ are fitted using a  Mat\'ern covariance with parameters estimated from the subsampled data.  We estimate the regression coefficients and covariance parameters based on the subsampled dataset. Then, after fitting the parameters we split the data set into a prediction set of $36$ locations, which is predicted given the remaining $ 584$ observations. The result is shown in Table \ref{tab:res}. The coefficients change as the theory suggests, and one can note the absurd implication of using this model for predicting precipitation using temperature: As the data is standardized, the yearly variance of both pressure and temperature is one, while the predictor $\beta_T T(s,t)$ has a variance of $1.57^2$. Also note that the prediction error is slightly decreasing with $n$ for temperature, even though the coefficient goes to zero, this is inline with theory as the unsmoothed precipitation cannot contribute to the interpolation.

Finally, we fit the same models but using smoothed covariates. These smoothed covariates are obtained by first estimating the covariance of the covariates, i.e., we estimate the models $P(s,t)=\beta_0+Z_P(s,t;\theta^{fixed}_{P})$ and $T(s,t)=\beta_0+Z_T(s,t;\theta^{fixed}_{T})$. Then we multiply the observed precipitation $\mv{P}$ and temperature $\mv{T}$ by some powers of the fitted covariance matrices, yielding $\cS\mv{P}= \mv{\Sigma}^3_{\theta^{fixed}_{P}}\mv{P}$  and $\cS\mv{T}= \mv{\Sigma}^{0.5}_{\theta^{fixed}_{T}}\mv{T}$. These choices are likely not optimal, but the results are sufficient to ensure that the covariates are smooth enough as can be seen in the stable estimates in Table \ref{tab:ressmooth}. Note also that there is no large difference in prediction error compared to the results in Table~\ref{tab:res}, which means that the problem cannot be identified through  cross validation.

\begin{table}[t]
	\centering
	\begin{tabular}{@{}lcr cr @{}}
		& \multicolumn{2}{c}{Temperature model} & \multicolumn{2}{c}{Precipitation model} \\
		\cmidrule(lr){2-3}\cmidrule(lr){4-5}
		$n (space)$ & $\beta_{SP} (CI95\%)$ & RMSE & $\beta_{ST}(CI95\%)$ &  RMSE   \\
		\hline
		$15$ & $-0.32\,(-0.44,-0.19)$ & $0.17$ & $-0.37\,(-0.50,-0.25)$ & $0.42$  \\
		$50$ & $-0.19\,(-0.33,-0.06)$ & $0.17$ & $-0.48\,(-0.59,-0.38)$& $0.42$   \\
		$100$ & $-0.19\,(-0.33,-0.04)$& $0.17$ &$-0.55\,(-0.65,-0.46)$& $0.42$   \\
		$250$ & $-0.31\,(-0.46,-0.17)$& $0.17$ &$-0.58\,(-0.68,-0.49)$& $0.42$  \\
		$620$ & $-0.36\,(-0.53,-0.19)$& $0.17$ &$-0.58\,(-0.67,-0.49)$& $0.42$   \\
				\hline
	\end{tabular}
	\caption{Results for modelling temperature with smoothed precipitation as covariate and precipitation with smoothed temperature as covariate. The table shows the estimated regression coefficients, their $95\%$ confidence intervals, and the root mean square prediction error for the prediction set.}
	\label{tab:ressmooth}
\end{table}

\section{Discussion}\label{sec:discussion}
We have investigated the implications of misspecifying the smoothness of covariates in regression models within an infill asymptotic framework, and in particular showed the importance of ensuring that the covariates do not exhibit rougher characteristics than the underlying field. If the covariates are too rough, the corresponding regression coefficients will converge to zero no matter how high the correlation is with the observed data. 
This main result is formally established in Theorem \ref{thm1}. 

On the other hand, if a misspecified covariate possess roughness equal to or slightly smoother than the field, the corresponding regression coefficient may diverge to infinity under certain scenarios. We were unable to establish this result solely based on the assumption of observations filling the domain; indeed, we suspect that the result need not be true under such weak conditions. Instead, we demonstrated the behavior utilizing observations of eigenvectors, as detailed in Proposition \ref{The:betan2}. However, both simulated data from random locations (see Section \ref{sec:simstudy2}) and real-world data (see Section \ref{sec:app}) indicate that this behavior holds for point observations at random locations, at least for stationary Gaussian random fields with Mat\'ern covariances.

We have not encountered any previous works studying misspecified covariates under infill asymptotics. There are, however, related works worth commenting on. For example in \cite{gilbert2023consistency}, the regression coefficient is defined as identifiable if it is less smooth than $\epsilon$, underscoring the significance of smoothness. While their definition of identifiability aligns with the ability to perfectly separate noise from regression coefficients, we prefer  to denote this as a lack of consistent estimability, to avoid confusion with the classical definition of identifiability (refer to Definition 5.2 in \cite{lehmann2006theory}).
Another pertinent work is \cite{dupont2023demystifying}, which conducts a non-asymptotic analysis of spatial confounding effects and highlights the impact of smoothing. On page 8, they state, ``the expression in Corollary 3.2 shows firstly that bias in the spatial model is directly linked to the smoothing in the model, as without smoothing the bias would not arise.''
Finally, a recent method proposed by \cite{dupont2022spatial} suggests removing the smooth component of $X$ using a thin plate spline. This recommendation appears contrary to our findings. However, the authors clarify in their discussion that their methods are designed for covariates with non-spatial components, implying a measurement error component within $X$. Consequently, their approach would be suitable in scenarios where $\cS X \notin \cC^{1/2}(L_2(\cX,\nu_\cX))$ in our notation.

An intriguing avenue for future research is to explore general assumptions on the observations under which the regression coefficients tend to infinity under misspecified smoothness. Additionally, investigating whether these properties persist when substituting the Gaussian field $\epsilon$ in \eqref{eq:main_model} with a more general L\'evy random field is an interesting topic for future research.

\section*{Acknowledgments}\label{sec:acknowledgment}
The authors thanks Peter Diggle and 
Claudio Fronterre for helpful discussions and suggestions of relevant articles to study. Further the authors would like to thank Alexandre Simas for discussion of the results and help with certain proofs.
\begin{appendix}
	\section{Collected proofs}\label{sec:proofs}
	Throughout the proofs we will use representations of Gaussian fields in terms of Gaussian white noise $\cW$ on $L_2(\cX,\nu_{\cX})$, which can be represented as a family of centered Gaussian random variables $\{\cW(h) : h\in L_2(\cX, \nu_{\cX})\}$ such that
	$$
	\forall f,g \in L_2(\cX, \nu_{\cX}),\quad \pE(\cW(f)\cW(g)) = (f,g)_{L_2(\cX, \nu_{\cX})}.
	$$
	A Gaussian random field $\epsilon$ with covariance operator $\cC$ can then be represented as $\epsilon = \cC^{1/2}\cW$.
	\subsection{General results}
	\begin{proof}[Proof of Theorem~\ref{thm1}]
		Let $f^{\beta}_n$ denote the density of $Y_1,\ldots,Y_n$ under $\mu_{\beta}$, and let $\hat{f}_n(\beta)$ denote the corresponding density under $\hat{\mu}_\beta$. By the Feldman--Hajek theorem  \citep[Theorem~2.25]{daPrato2014}, we have that  $\mu_\beta$ and $\hat{\mu}_{\hat{\beta}}$ are equivalent if 
		\begin{equation}\label{eq:meandiff}
		 m + \beta \cS X - \hat{\beta} X  \in  \cC^{1/2}(L_2(\cX,\nu_\cX)),
		\end{equation}
		and they are orthogonal otherwise. 
		
		We start by proving (i). 
		Since $\cS X \notin \cC^{1/2}(L_2(\cX,\nu_\cX))$ and $\cS X- X \in \cC^{1/2}(L_2(\cX,\nu_\cX))$, we have that $X \notin \cC^{1/2}(L_2(\cX,\nu_\cX))$ and	
		\eqref{eq:meandiff} therefore shows that $\hat{\mu}_{\hat{\beta}}$ is equivalent to $\mu_\beta$ if $\beta = \hat{\beta}$ and that the measures are orthogonal whenever $\hat{\beta}\neq \beta$. 
		Because of this, we by \citet[][Theorem 1, p.442]{Gikhman1974} and \citet[][Proposition 2.26]{daPrato2014} have that 
		\begin{equation}\label{eq:limit1}
			\lim_{n\rightarrow \infty}\log\frac{f^\beta_n}{\hat{f}_n(\hat{\beta})} 
			= 
			\begin{cases}
				-\infty & \hat{\beta} \neq \beta,\\
				\log(c_0) & \hat{\beta} = \beta,
			\end{cases}
		\end{equation}
		with probability 1, where $c_0>0$ is the Radon--Nikodym derivative 
		\begin{equation}\label{eq:deriv}
		\frac{ \md \mu_\beta}{\md \hat{\mu}_{\beta}}(y) = \exp\left((y-m - \beta\cS X, \beta \left(\cS X-X \right)+m)_{\cC} - \frac1{2}\|\beta \left(\cS X-X \right) +m\|_{\cC}^2\right),
		\end{equation}
	for $\hat{\beta} = \beta$. 
		When $\hat{\beta} = \beta$, $\|m + \beta \cS X-\hat{\beta}X \|_{\cC}$ is finite because  $\cS X - X \in \cC^{1/2}(L_2(\cX,\nu_\cX))$ and $m \in \cC^{1/2}(L_2(\cX,\nu_\cX))$. Further, for $\hat{\beta} = \beta$ we have that $m + \beta \cS X-\hat{\beta}X \in \cC^{1/2}(L_2(\cX,\nu_\cX)$ so that we can write $m + \beta \cS X-\hat{\beta}X = C^{1/2} f$ for $f \in L_2(\cX,\nu_{\cX})$. Using this and that 
		$y = m + \beta \cS X + \epsilon$, where $\epsilon = \cC^{1/2}\cW$, 
		we obtain that 
		\begin{align*}
		(y-\hat{\beta} X,  m + \beta \cS X-&\hat{\beta}X )_{\cC} = (\epsilon, m + \beta \cS X- \hat{\beta} X )_{\cC} + \|m + \beta \cS X- \hat{\beta} X \|_{\cC} \\
		&=  (C^{1/2}\cW, C^{-1} (m + \beta \cS X- \hat{\beta} X) )_{L_2(\cX, \nu_{\cX})} + \|m + \beta \cS X- \hat{\beta} X \|_{\cC} \\
		&=  (C^{1/2}\cW, C^{-1/2} f )_{L_2(\cX, \nu_{\cX})} + \|m + \beta \cS X- \hat{\beta} X \|_{\cC} \\
		&=  \cW(f) + \|m + \beta \cS X- \hat{\beta} X \|_{\cC} < \infty \quad \text{$\bbP$-a.s}.
		\end{align*}
		Thus, $c_0 < \infty$ $\bbP$-a.s.
		To prove the claim we need show that $\hat{\beta}_n $ is not a divergent sequence. 
		It is enough to show that for a fixed $\epsilon>0$, there exists an $N$ such that $\rho_n(\hat{\beta}) = \log f_n - \log \hat{f}_n(\hat{\beta}) < \log(c_0) - 1$ for all $|\beta-\hat{\beta}|>\epsilon$ whenever $n>N$. 
		To that end, fix a $\beta^*$ with $|\beta^*-\beta|>\epsilon$ and note that by \eqref{eq:limit1}, there is an $N$ such that $\rho_n(\beta^*) < \log(c_0) - 1$. 
		Next, it is clear that $\hat{\beta} \mapsto \log \hat{f}_n(\hat{\beta})$ is strictly concave, and we therefore have that  $\rho_n(\hat{\beta}) < \log(c_0) - 1$ for all $\hat{\beta}$ with $|\beta-\hat{\beta}|>|\beta^*-\hat{\beta}|$. 
		This finishes the proof since $\epsilon$ was arbitrary.
		
		We now prove (ii). Since $\cS X \notin \cC^{1/2}(L_2(\cX,\nu_\cX))$ and $(\cS X- X) \notin \cC^{1/2}(L_2(\cX,\nu_\cX))$,  \eqref{eq:meandiff} shows that $\hat{\mu}_{\hat{\beta}}$ is orthogonal to $\mu_\beta$ whenever $\hat{\beta}\neq 0$. We have that 
		\begin{equation}
			\lim_{n\rightarrow \infty}\log\frac{f^{\beta}_n}{\hat{f}_n(\hat{\beta})} 
			= 
			\begin{cases}
				-\infty & \hat{\beta} \neq 0,\\
				\log(c_0) & \hat{\beta} = 0,
			\end{cases}
		\end{equation}
		with probability 1, where $c_0>0$ is the Radon--Nikodym derivative \eqref{eq:deriv}, which is finite because $\hat{\beta} = 0$. The remaining proof is now identical to that of (i).
		
		To prove (iii) note that $\lim_{n\rightarrow \infty}\log\frac{f^\beta_n}{\hat{f}_n(\beta) }$ will be equal (with probability 1) to the Radon--Nikodym derivative \eqref{eq:deriv}.
		The maximum likelihood estimator of $\hat{\beta}$ is the maximizer of this expression, which is given by 
		\begin{align*}
			\beta^* &= \argmin_{\hat{\beta}}\left\{ (Y, \hat{\beta}  X)_{\cC} - \frac1{2}\|\hat{\beta}  X\|_{\cC}^2\right\} = \frac{(Y,  X)_{\cC} }{\|  X\|_{\cC}^2}.
		\end{align*}
		Now will show that this a random variable with finite variance and expectation.
		Since $X \in \cC^{1/2}(L_2(\cX,\nu_\cX))$ there exists an $\widetilde{X}\in L_2(\cX,\nu_\cX)$ such that $ X = \cC^{1/2}\widetilde{X}$. 
		Furthermore, $Y$ can be expressed as $m + \beta \cS X + \cC^{1/2}\cW$ where $\cW$ is Gaussian white noise. Thus, 
		\begin{equation}\label{eq:YXproof}
		(Y,X)_{\cC} = (m+ \beta \cS X,X)_{\cC}   + (\cC^{1/2}\cW,X)_{\cC}.
		\end{equation}
		Here 
		$
		(\cC^{1/2}\cW, X)_{\cC} = (\cC^{1/2}\cW,\cC^{-1/2}\widetilde{X})_{L_2(\cX,\nu_\cX) } = \cW(\widetilde{X}) < \infty$, $\bbP$-a.s.,
		since $\widetilde{X}\in L_2(\cX,\nu_\cX)$ and $\cW$ is Gaussian white noise on $L_2(\cX,\nu_\cX)$. 
		Finally, since $X \in \cC^{1}(L_2(\cX,\nu_\cX))$ there exists $\bar{X}\in L_2(\cX,\nu_{\cX})$ such that $X=\cC \bar X $ and 
		$
		(m+ \beta \cS X,X)_{\cC}=(m+ \beta \cS X, \bar X)_{L_2(\cX,\nu_\cX)} < \infty. 
		$
	\end{proof}
	
	\begin{proof}[Proof of Corollary~\ref{cor:measerr}]
		This is an immediate consequence of the proof of Theorem \ref{thm1} and \citet[][Theorem 6 (Chapter 4)]{stein99}.
	\end{proof}

	\subsection{Fractional-order covariance operators}

	\begin{proof}[Proof of Corollary~\ref{cor:betawm}]
		Let $\dot{H}_S^{2\gamma}$ denote the domain of $A_S^{\gamma}$ and note that we have the equivalence $\dot{H}_A^0 \cong L_2(\cX,\nu_{\cX}) \cong \dot{H}_S^0$.
		By \citet[][Lemma 2.1]{BKK2020}, $A_S^{\gamma}$ can be extended to an isometric isomorphism $A_S^{\gamma} : \dot{H}_S^{s} \rightarrow \dot{H}_S^{s-2\gamma}$ for $s\in\mathbb{R}$. Further, because $\lambda_{S,j}$ and $\lambda_j$ have the same asymptotic growth, we have that $\dot{H}_A^s \cong \dot{H}_S^s$ for $s\in\mathbb{R}$. Thus, we have that $\cS X \in \dot{H}_A^{p-2\gamma}$.
			
		Using the same notation as in the proof of Theorem~\ref{thm1}, we have that $\mu_\beta$ and $\hat{\mu}_{\hat{\beta}}$ are equivalent if and only if $\beta \cS X - \hat{\beta} X \in \dot{H}_A^{\alpha}$.
		If $\gamma=0$ and $p<\alpha$, we have that $\cS X \notin \dot{H}_A^{\alpha}$ and $\cS X - X = 0 \in \dot{H}_A^{\alpha}$. Therefore, the result follows from Theorem~\ref{thm1} (i). 
		If $p < \alpha$ and $2\gamma < p - \alpha$, we have that $\cS X \in \dot{H}_A^{p - 2\gamma} \subseteq \dot H_A^{\alpha}$ and $\cS X - X \notin \dot{H}_A^{\alpha}$. Therefore, the result follows from Theorem~\ref{thm1} (ii) as $\gamma$ is negative so that $\cS : L_2(\cX,\nu_{\cX}) \rightarrow L_2(\cX,\nu_{\cX})$. 
		Finally, if $p\geq2\alpha$ and $  \gamma \leq 0$ , we have that $X\in \dot{H}_A^{2\alpha}$ and the result follows from Theorem~\ref{thm1} (iii).
	\end{proof}	

	\begin{proof}[Proof of Proposition~\ref{The:betan2}]
		When the observations are eigenfunction observations, the likelihood is given by
			$
			l(\beta) = - \frac{1}{2}\sum_{i=1}^n \lambda^{\alpha}_i \left(Y_i - X_i\beta \right)^2,
			$
			where $X_i=\left(X,e_i\right)_{L_2(\cX, \nu_\cX)}$,
			and thus
			$$
			\hat{\beta}_n =\frac{ \sum_{i=1}^n \lambda^{\alpha}_i  Y_i X_i}{ \sum_{i=1}^n \lambda^{\alpha}_i X^2_i}.
			$$
			First note that $Y_i = (\mathcal{S}X + \epsilon,e_i)_{L_2(\cX, \nu_\cX)}$, where $\epsilon$ can be written as $\epsilon = \cC^{1/2}\mathcal{W}$ for Gaussian white noise $\mathcal{W}$ on $L_2(\cX, \nu_\cX)$. Thus, 
			\begin{equation}
				\label{eq:proof_betahat}
			\hat{\beta}_n = \beta \frac{\sum_{i=1}^n \lambda_i^{\alpha}  X_i (\mathcal{S}X,e_i)_{L_2(\cX, \nu_\cX)} }{ \sum_{i=1}^n \lambda^{\alpha}_i X_i^2} + \frac{  \sum_{i=1}^n \lambda_i^{\alpha}  X_i (\mathcal{C}^{1/2}\mathcal{W},e_j)_{L_2(\cX, \nu_\cX)}}{ \sum_{i=1}^n \lambda^{\alpha}_i X_i^2}.
			\end{equation}
			The second term is a random variable with
			\begin{align*}
				\mathbb{V} \left[\frac{  \sum_{i=1}^n \lambda_i^{\alpha}  X_i (\mathcal{C}^{1/2}\mathcal{W},e_i)_{L_2(\cX, \nu_\cX)}}{ \sum_{i=1}^n \lambda^{\alpha}_i X_
					i^2} \right]&=
				\mathbb{V} \left[\frac{  \sum_{i=1}^n \lambda_i^{\alpha/2}  X_i \xi_i}{ \sum_{i=1}^n \lambda^{\alpha}_i X_i^2} \right] 
				= \frac{  \sum_{i=1}^n \lambda_i^{\alpha}  X_i^2 }{ \left( \sum_{i=1}^n \lambda^{\alpha}_i X_i^2 \right)^2}=B_n.
			\end{align*}
			Consider first the case $p < \alpha$:
			then, $\sum_{i=1}^\infty \lambda^{\alpha}_i X^2_i = \infty$ 
			and hence $B_n \rightarrow 0$. Thus, the asymptotic value of $\hat{\beta}_n$ depends solely on the first term in \eqref{eq:proof_betahat}:
			$$
			\beta\frac{\sum_{i=1}^n \lambda_i^{\alpha}  X_i (\mathcal{S}X,e_i)_{L_2(\cX, \nu_\cX)} }{ \sum_{i=1}^n \lambda^{\alpha}_i X^2_i} = 
			\beta\frac{ \sum_{i=1}^n \lambda_i^{\alpha}  \lambda_{S,i}^{\gamma}  X_i^2 }{ \sum_{i=1}^n \lambda^{\alpha}_i X^2_i} := \beta A_n.
			$$
			If $\gamma = 0$, we clearly have that $A_n \rightarrow 1$, so $\hat{\beta} \rightarrow \beta$. Further, 
			$$
			A_n \leq \frac{ \lambda_{S,n}^{\gamma} \sum_{i=1}^n \lambda_i^{\alpha} X_i^2 }{ \sum_{i=1}^n \lambda^{\alpha}_i X^2_i} = \lambda_{S,n}^{\gamma}. 
			$$
			Thus, $A_n \rightarrow 0$ if $\gamma <0$, so in this case $\hat{\beta}_n\rightarrow 0$. Now, take $m<n$, we then have that 
			$$
			A_n \geq 
			\frac{\sum_{i=1}^m \lambda_i^{\alpha}  \lambda_{S,i}^{\gamma}  X_i^2 + \lambda_{S,m}^{\gamma}\sum_{i=m}^n \lambda_i^{\alpha}   X_i^2}
			{\sum_{i=1}^m \lambda_i^{\alpha}  X_i^2 + \sum_{i=m}^n \lambda_i^{\alpha}   X_i^2}
			= 
			\frac{\frac{\sum_{i=1}^m \lambda_i^{\alpha}  \lambda_{S,i}^{\gamma}  X_i^2}{\sum_{i=m}^n \lambda_i^{\alpha}   X_i^2} + \lambda_{S,m}^{\gamma}}{\frac{\sum_{i=1}^m \lambda_i^{\alpha}  X_i^2}{\sum_{i=m}^n \lambda_i^{\alpha}   X_i^2} + 1} \rightarrow \lambda_{S,m}^{\gamma}
			$$
			as $n\rightarrow\infty$ because all $X_i$ are bounded and $\sum_{i=1}^{\infty} \lambda_i^{\alpha} X_i^2 = \infty$. Since this holds for any $m<n$, we can conclude that $A_n$ diverges if $\gamma>0$.
			Now consider the case $p\geq \alpha$. In this case, $\|X\|_{\cC}^2 = \sum_{i=1}^\infty \lambda^{\alpha}_i X_i^2 < \infty$. Further, 
			$$ 
			\sum_{i=1}^n \lambda^{\alpha}_i \lambda_{S,i}^{\gamma} X_i^2 \leq C_s \sum_{i=1}^n \lambda^{\alpha+\gamma}_i X_i^2 \rightarrow \|X\|_{A,\alpha+\gamma}^2 < \infty
			$$
			if $\alpha+\gamma \leq p$. Thus, in this case, 
			$
			\hat{\beta}_n \rightarrow (Y,X)_{\cC}/\|X\|_{\cC}^2 < \infty$ $\bbP$-a.s..
			If, on the other hand, $\alpha+\gamma > p$, 
			$
			\sum_{i=1}^n \lambda^{\alpha}_i \lambda_{S,i}^{\gamma} X_i^2  \rightarrow \infty
			$
			and thus, $\hat{\beta}_n \rightarrow sign(\beta)\infty$.
		\end{proof}
			
\end{appendix}

\bibliographystyle{chicago}
\bibliography{multivariate-bib}
\end{document}